\newcommand{\BibTeX}{{\scshape Bib}\kern-.08em\TeX}
\newcommand{\T}{\S\kern .15em\relax }
\newcommand{\AMS}{$\mathcal{A}$\kern-.1667em\lower.5ex\hbox
        {$\mathcal{M}$}\kern-.125em$\mathcal{S}$}
\DeclareMathOperator{\im}{Im}
\DeclareMathOperator{\vol}{vol}
\DeclareMathOperator{\proj}{Proj}
\DeclareMathOperator{\hn}{HN}
\DeclareMathOperator{\an}{an}
\DeclareMathOperator{\supp}{supp}
\DeclareMathOperator{\asy}{asy}
\DeclareMathOperator{\Vect}{Vect}
\DeclareMathOperator{\Pic}{Pic}
\renewcommand{\sp}{\mathrm{sp}}
\DeclareMathOperator{\spec}{Spec}
\renewcommand{\P}{\mathbb{P}}
\newcommand{\wmu}{\widehat{\mu}}
\newcommand{\adeg}{\widehat{\deg}}
\newcommand{\E}{\overline{E}}
\renewcommand{\O}{\mathcal{O}}
\newcommand{\ndot}{\raisebox{.4ex}{.}}
\title{Arithmetic Fujita approximation over adelic curves}
\date{\today}
\author{Chunhui Liu}
\address{Institute for Advanced Study in Mathematics\\
Harbin Institute of Technology\\
150001 Harbin\\P. R. China}
\email{chunhui.liu@hit.edu.cn}
\begin{document}
\def\smfbyname{}
\begin{abstract}
In this paper, we will prove an analogue of Fujita's approximation under the framework of Arakelov theory over adelic curves, which gives a positive response to a conjecture of Huayi Chen and Atsushi Moriwaki.
\end{abstract}
\begin{altabstract}
Dans cet article, on d\'emontrera une analogie de l'approximation de Fujita sous le cadre de la th\'eorie d'Arakelov sur courbes ad\'eliques, qui donne une r\'eponse positive d'une conjecture de Huayi Chen et Atsushi Moriwaki.
\end{altabstract}

\maketitle

\tableofcontents
\section{Introduction}
Fujita approximation is an approximative version of Zariski decomposition of pseudo-effective divisors, and see \cite{Zariski62} for more details. More precisely, let $X$ be a projective variety over a field $k$, and $L$ be a line bundle on $X$. The volume of $L$ is defined as
\[\vol(L)=\limsup_{D\rightarrow\infty}\frac{\dim_kH^0(X,L^{\otimes D})}{D^{\dim(X)}/\dim(X)!}.\]
We say that $L$ is a \textit{big} line bundle if $\vol(L)>0$. Fujita's approximation theorem states that, for all $\epsilon>0$, there exist a $p\in\mathbb N^+$ and a birational morphism $v:X'\rightarrow X$, such that $v^*(L^{\otimes p})$ can be decomposed as the tensor product of an ample line bundle $A$ and an effective line bundle $M$, which satisfy
 \[p^{-\dim(X)}\vol(A)\geqslant\vol(L)-\epsilon.\]
Intuitively, this result asserts that a big line bundle can be arbitrarily closely approximated by ample line bundles along an effective direction.

This approximated decomposition theorem is proved by T. Fujita \cite{Fujita94} for the case of characteristic $0$, and by S. Takagi \cite{Takagi07} for an arbitrary characteristic. We refer the readers to \cite[\S 11.4]{LazarsfeldII} for a survey on this topic.

\subsection{Arithmetic analogue over number fields}
It is quite nature to ask whether there is an analogue of Fujita's approximation theorem under the framework of Arakelov geometry, since we can define the arithmetic positivity and arithmetic volume of an adelic line bundle over arithmetic varieties.
\subsubsection{}
The arithmetic analogue of volume function and arithmetic bigness under the framework of Arakelov geometry are introduced in \cite{Moriwaki00,Moriwaki07} by A. Moriwaki. Soon after these, voluminous works appeared on this topic, such as \cite{Moriwaki08,Chen11,Yuan2015,Ikoma2015,Moriwaki2016,Ikoma2021}.

Let $K$ be a number field, $\mathscr X$ be an arithmetic variety over $\spec \O_K$ of dimension $d$, and $\overline{\mathscr L}$ be a Hermitian line bundle on $\mathscr X$. For a $D\in\mathbb N$, we define
\[\widehat{h}^0(\mathscr X,\overline{\mathscr L}^{\otimes D})=\log\#\left\{s\in H^0(\mathscr X,\mathscr L^{\otimes D})\mid \|s\|_{\sup,v}\leqslant1,\;\forall\; v\in M_{K,\infty}\right\},\]
and
\[\widehat{\vol}(\overline{\mathscr L})=\limsup_{D\to\infty}\frac{\widehat{h}^0(\mathscr X,\overline{\mathscr L}^{\otimes D})}{D^{\dim(\mathscr X)}/\dim(\mathscr X)!}.\]
By the works mentioned above, the arithmetic volume function $\widehat{\vol}(\ndot)$ has many similar properties to those of the geometric setting $\vol(\ndot)$, so it is expected that an arithmetically big line bundle $\overline{\mathscr L}$, which means $\widehat{\vol}(\overline{\mathscr L})>0$, also has the property of Fujita approximation. In fact, this used to be a conjecture of A. Moriwaki.
\subsubsection{}
In \cite{Chen10,Yuan09}, H. Chen and X. Yuan proved the arithmetic Fujita approximation over number field at almost the same time. Although they both applied \cite[\S 3]{Lazarsfeld_Mustata08}, but their methods are essential different. X. Yuan \cite[Theorem C]{Yuan09} takes pluri-subharmonic envelope to deal with the infinite part, which is well-known in complex analysis.

In \cite{Chen10}, H. Chen applied his $\mathbb R$-filtration method developed in \cite{Chen10b}. More precisely, by considering the successive minima on $H^0(\mathscr X,\mathscr L^{\otimes n})$, we can make $\bigoplus\limits_{n\geqslant0}H^0(\mathscr X,\mathscr L^{\otimes n})$ to be a filtrated graded linear series. Then by combining some results of approximable graded linear series in \cite{Lazarsfeld_Mustata08}, we are able to apply some convergent properties of $\mathbb R$-filtrations to construct an ample line bundle which approximates $\overline{\mathscr L}$ well. This is the main idea of the proof in \cite{Chen10}.
\subsection{Arakelov geometry over adelic curves}
In \cite{ChenMoriwaki2020}, H. Chen and A. Moriwaki developed a kind of Arakelov theory over the so-called adelic curve. Based on this framework, they built an arithmetic intersection theory in \cite{ChenMoriwaki2021}, and studied the arithmetic positivities in \cite{ChenMoriwaki2022}. Under this setting, we can study the arithmetic geometry over number fields, function fields and other bases with adelic structures under a same framework.
\subsubsection{}
Briefly speaking, an adelic curve $S=(K,(\Omega,\mathcal A,\nu),\phi)$ consists of an arbitrary field $K$ which is equipped with a parameter set $\Omega$ of all its absolute values. We require the set $\Omega$ parameterized by $\phi$ has a $\sigma$-algebra structure $\mathcal A$ and a measure $\nu$ compatible with $\mathcal A$.

Under this setting, we are able to generalize many classic results and subjects of Arakelov geometry, including the arithmetic volume function, arithmetic positivity, and Hilbert-Samuel formula. Since the adelic structure becomes more complicated than those of classic cases, the obstructions original from measurability, integrability, and non-discreteness make the developing process more technique. In fact, some classic properties are not valid any longer in this framework, though many of these properties have been developed in \cite{ChenMoriwaki2020,ChenMoriwaki2021,ChenMoriwaki2022}, and see also \cite{Sedillot2023}.
\subsubsection{}
Inspired by \cite{Chen10,Yuan09}, it is quite natural to ask whether the Fujita approximation is still valid over adelic curves. Over a general adelic curve, we are lack of the Minkowskian property, so we cannot apply the filtrated method by successive minina directly in \cite{Chen10}. At the same time, some constructions and properties do not work any longer without the help of integral models or Minkowskian property, and see Example \ref{conter-exmaple of arithmetic Kodaira decompostion} for such an example. 

In \cite[\S 8.5]{ChenMoriwaki2022}, H. Chen and A. Moriwaki proved a similar result to the arithmetic Fujita approximation over adelic curves, which is called the \textit{relative Fujita approximation}. More precisely, let $\overline L$ be an adelic line bundle, where $L$ is big on $X$. Then for any $t<\wmu_{\max}^{\asy}(\overline L)$, there exists a sufficiently large $p\in\mathbb N$ and a birational morphism $\varphi:X'\rightarrow X$, such that $(\varphi^*\overline L)^{\otimes p}=\overline M\otimes \overline N$, where $\overline N$ is relatively ample and $\overline M$ is effective, and $\wmu_{\min}^{\asy}(\overline L)>pt$. Here $\wmu_{\max}^{\asy}(\ndot)$ and $\wmu_{\min}^{\asy}(\ndot)$ denote the asymptotic maximal and minimal slopes respectively.

Compared with this relative version above, the classic arithmetic Fujita approximation seems to take an "average" at every minima of minimal slope, and approximates all of them simultaneously, not only the maximal one. In fact, this is conjectured by Chen-Moriwaki in \cite[Remark 9.2.7]{ChenMoriwaki2022}.
\subsection{Main result}
In this article, we prove the following result on the arithmetic Fujita approximation over adelic curves under the framework \cite{ChenMoriwaki2020,ChenMoriwaki2022}, which gives a positive response to the conjecture \cite[Remark 9.2.7]{ChenMoriwaki2022}. More generally, we prove a version of adelic $\mathbb R$-line bundle below at the same time, which is also a generalization of \cite[Theorem 4.3]{Chen10} and \cite[Theorem C]{Yuan09}. 

In the statement of the main result, instead of considering an effective line bundle under the arithmetic setting, we use the notation that a line bundle has a non-zero small section over an adelic curve, and see \S \ref{definition of effective} for the precise definition.
\begin{theo}[Theorem \ref{arithmetic Fujita approximation of R-ample}]
Let $S=(K,(\Omega,\mathcal A,\nu),\phi)$ be a proper adelic curve with a perfect $K$, which satisfies either $\mathcal A$ is discrete or $K$ has a countable sub-field which is dense in $K_w$ for all $w\in\Omega$. Let $X$ be a projective variety over $\spec K$, and $\overline L$ be an arithmetically big $\mathbb R$-line bundle on $X$ over $S$. Then for any $\epsilon>0$, there exist a sufficiently large $p\in\mathbb N$ and a birational morphism $v:X' \rightarrow X$, such that $v^*(\overline L^{\otimes p})=\overline A\otimes \overline M$, where $\overline A$ is arithmetically ample and $\overline M$ is an adelic $\mathbb R$-line bundle have a non-zero small section. Moreover, we have
\[p^{-(\dim(X)+1)}\widehat{\vol}(\overline A)\geqslant\widehat{\vol}(\overline L)-\epsilon.\]
\end{theo}
\subsubsection{}
In \cite[\S 3]{ChenMoriwaki2022}, they develop a method which constructs a graded linear series over the adelic curve of trivially valued field from general one by its Harder-Narasimhan filtration. Next, we can modify the original norms by its spectral norm, which makes the graded linear series to be filtrated. Under this construction, the new graded linear series has some asymptotic invariants same as the original one. Then some ideas in \cite{Chen10,Lazarsfeld_Mustata08} can be absorbed in this work.

Once we prove the case of $\mathbb Q$-line bundle (in Corollary \ref{arithmetic Fujita approximation of Q-ample}) from the case of usual adelic line bundle (in Corollary \ref{arithmetic Fujita approximation of ample}), we can approximate an arithmetically big $\mathbb R$-line bundle by an arithmetically big $\mathbb Q$-line bundle in an effective direction, and then we can prove the case of $\mathbb R$-line bundle (in Theorem \ref{arithmetic Fujita approximation of R-ample}).

\subsubsection{}
In the application of $\mathbb R$-filtration method, we need to take some limits of some graded linear series and measures, and most operations only work when the objects are measurable and integrable. If either $\mathcal A$ is discrete or $K$ has a countable sub-field which is dense in $K_w$ for all $w\in\Omega$, there will be no obstructions on the measurability or integrability, and see \cite[\S 4]{ChenMoriwaki2020} for more details of the necessity.

In order to study some arithmetic positivities by $\wmu_{\max}^{\asy}(\ndot)$ and $\wmu_{\min}^{\asy}(\ndot)$, we need to suppose the base field $K$ is perfect. For example, for an adelic line bundle $\overline L$ where $L$ is nef, we need to apply $\wmu_{\min}^{\asy}(\overline L)\geqslant0$ to deduce $\overline L$ is arithmetically nef by \cite[Proposition 9.1.7]{ChenMoriwaki2022}, where we need the perfect condition. See \cite[\S 9]{ChenMoriwaki2022} for more details of the necessity.

Of course, in \cite{Chen10,Yuan09}, there is no such problem since for the case of number field, because $\mathcal A$ is discrete in the language of adelic curve and the base field is perfect by its characteristic $0$.
\subsection{Organization of article}
In \S \ref{chap2}, we introduce the required notions of Arakelov theory over adelic curves and $\mathbb R$-filtration. In \S \ref{Chap. 3}, we consider graded linear series over a general adelic curve and construct those over the adelic curve of trivially valued field filtrated by the Harder-Narasimhan filtration. In \S \ref{Chap4}, we prove the property of Fujita approximation of certain graded linear series. In \S \ref{Chap5}, we prove the arithmetic Fujita approximation of adelic line bundles in the usual case, the case of $\mathbb Q$-line bundle and the case of $\mathbb R$-line bundle sequentially.
\subsection*{Acknowledgements}
I would like to thank Prof. Atsushi Moriwaki for introducing this subject to me, and suggesting me to use the notation "having a non-zero small section". In addition, I would like to thank Prof. Huayi Chen for many useful discussions and suggestions, and Dr. Wenbin Luo for the advices on some key techniques. In particular, I would like to thank the anonymous referee for his or her valuable comments on this article. Part of this work was accomplished during my visit to Institute for Theoretical Sciences, Westlake University, and I would like to thank their generous invitation and reception. Chunhui Liu was supported by National Science Foundation of China No. 12201153.
\section{Arakelov theory over adelic curves}\label{chap2}
In this section, we will introduce the foundations of Arakelov theory over adelic curves and of $\mathbb R$-filtration. The main references are \cite{ChenMoriwaki2020,ChenMoriwaki2022}.
\subsection{Foundations of Arakelov geometry over adelic curves}
First, we recall some useful knowledge on the Arakelov geometry over adelic curves, and see \cite{ChenMoriwaki2020} for a systematical introduction to this subject.
\subsubsection{Adelic curve}
Let $K$ be a field. We denote by $M_K$ the set of all absolute values on $K$.
\begin{defi}[Adelic curve]
We say that $(\Omega,\mathcal A,\nu)$ is an \textit{adelic structure} on $K$ equipped with a mapping
\[\begin{array}{rrcl}
\phi:&\Omega&\longrightarrow&M_K\\
&\omega&\mapsto&|\ndot|_\omega
\end{array}\]
which satisfies the properties below:
\begin{enumerate}
\item[(1)]$\mathcal A$ is a $\sigma$-algebra on $\Omega$, and $\nu$ is a measure on $(\Omega,\mathcal A)$;
\item[(2)]for all $a\in K^\times$, the function
\[\begin{array}{rcl}
\Omega&\longrightarrow&\mathbb R\\
w&\mapsto&\log|a|_w
\end{array}\]
is $\mathcal A$-measurable and integrable with respect to $\nu$.
\end{enumerate}
In addition, the data $S=(K,(\Omega,\mathcal A,\nu),\phi)$ is called an \textit{adelic curve}.
\end{defi}
We denote by $\Omega_f$ the subset of $\Omega$ whose images in $M_K$ with respect to $\phi$ is non-archimedean, and by $\Omega_\infty$ the subset of $\Omega$ whose image in $M_K$ is archimedean. We have $\Omega=\Omega_f\sqcup\Omega_\infty$ by definition directly.
\begin{defi}[Proper adelic curve]
We say that an adelic curve $S=(K,(\Omega,\mathcal A,\nu),\phi)$ is \textit{proper} if for all $a\in K^\times$, we always have
\[\int_\Omega\log|a|_w\nu(\mathrm d w)=0,\]
and the above equality is called the \textit{product formula} of $S$.
\end{defi}
We refer the readers to \cite[\S 3]{ChenMoriwaki2020} for more details on these notions.
\begin{exem}
Let $K$ be a number field or a function field. Then the usual ad\`ele ring of $K$ gives an adelic structure, and constructs examples of proper adelic curves.

Let $K$ be a finitely transcendental extension over $\mathbb Q$, which is a sub-field of $\mathbb C$. In \cite{Moriwaki00}, A. Moriwaki constructed a special adelic structure over $K$, and defined a height function with respect to it. This height function has been applied to study many arithmetic geometry subjects.
\end{exem}
\begin{exem}[Trivially valued field]\label{example: trivially valued field}
Let $K$ be an arbitrary field, and $|\ndot|_0$ be the trivial absolute value over $K$, which means
\[|a|_0=\begin{cases}
        1,\text{ if }a\neq0,\\
        0,\text{ if }a=0.
        \end{cases}\]
We note $S_0=(K,\{0\},|\ndot|_0)$ the adelic curve which counts the absolute value $|\ndot|_0$ once. By definition, $S_0$ is a proper adelic curve.
\end{exem}
Example \ref{example: trivially valued field} will be used in the article later. We refer the readers to \cite[\S 3.2]{ChenMoriwaki2020} for more examples of adelic curves.
\subsubsection{Adelic vector bundle}
Let $S=(K,(\Omega,\mathcal A,\nu),\phi)$ be an adelic curve, and $E$ be a vector space over $K$. For a family of norms $(\|\ndot\|_w)_{w\in\Omega}$ on $E_w=E\otimes_{K}K_w$ over $K_w$, if the norm family $(\|\ndot\|_w)_{w\in\Omega}$ on $E$ and its dual norm family $(\|\ndot\|^\vee_w)_{w\in\Omega}$ on $E^\vee$ are both $\mathcal A$-measurable on $\Omega$, and $(\|\ndot\|_w)_{w\in\Omega}$ is dominated, then we say $\overline E=(E,(\|\ndot\|_w)_{w\in\Omega})$ is an \textit{adelic vector bundle} over $S$. If $\dim(E)=1$, we say that $\overline E$ is an \textit{adelic line bundle} over $S$. See \cite[Definition 4.1.28]{ChenMoriwaki2020} for more details on this notion.

We say that an adelic vector bundle $\overline F=(F,(\|\ndot\|'_w)_{w\in\Omega})$ is an \textit{adelic sub-bundle} of $\overline E=(E,(\|\ndot\|_w)_{w\in\Omega})$ over $S$, if $F$ is a $K$-vector subspace of $E$, and $\|\ndot\|'_w$ is the restriction of $\|\ndot\|_w$ on $F$ for each $w\in\Omega$.

An adelic bundle $\overline G=(G,(\|\ndot\|''_w)_{w\in\Omega})$ is said to be an \textit{adelic quotient bundle} of $\overline E=(E,(\|\ndot\|_w)_{w\in\Omega})$ over $S$, if $G$ is a quotient vector space of $E$, and $\|\ndot\|''_w$ is the quotient norm $\|\ndot\|_w$ with respect to the canonical quotient homomorphism $E\rightarrow G$ for each $w\in\Omega$. If $G$ is the quotient space of $E$ modulo $F$, and is equipped with the quotient metrics from $\overline F$, then we denote $\overline G=\overline E/\overline F=\overline{E/F}$.

For an adelic vector bundle $\overline E=(E,(\|\ndot\|_w)_{w\in\Omega})$ over $S$, we define $\det(\overline E)=(\det(E),(\|\ndot\|_{\det,w})_{w\in\Omega})$ as follows: the $K$-vector space $\det(E)=\bigwedge^{\dim(E)}(E)$ is the determinant space of $E$, and $\|\ndot\|_{\det,w}$ is the determinant norm on $\det(E)_w$ of $\|\ndot\|_w$ for each $w\in\Omega$. We refer the readers to \cite[\S 1.1.13, \S 1.2.3]{ChenMoriwaki2020} for the details of the construction of $\|\ndot\|_{\det,w}$.
\begin{rema}
In general, for an adelic vector bundle $\overline E$ over $S=(K,(\Omega,\mathcal A,\nu),\phi)$, we cannot make sure that all its subspace equipped with the subspace norms is still an adelic vector bundle, and its quotient spaces admit the same situation. However, if $\mathcal A$ is a discrete $\sigma$-algebra or $K$ has a countable sub-field which is dense in $K_w$ for all $w\in\Omega$, we will get adelic vector bundles from all the above constructions, and see \cite[Proposition 4.1.24]{ChenMoriwaki2020} for more details. In the whole article, we will always assume these conditions.
\end{rema}
\subsubsection{Some Arakelov invariants}
Let $S=(K,(\Omega,\mathcal A,\nu),\phi)$ be a proper adelic curve, and $\overline E=(E,(\|\ndot\|_w)_{w\in\Omega})$ be an adelic vector bundle over $S$. For the determinant bundle $\det(\overline E)=(\det(E),(\|\ndot\|_{\det,w})_{w\in\Omega})$ of $\overline E$, we select an $s\in\det(E)\smallsetminus\{0\}$, and define
\[\adeg(\overline E)=-\int_\Omega\log\|s\|_{\det,w}\nu(\mathrm d w),\]
which is well-defined due to \cite[\S 4.2]{ChenMoriwaki2020}. This is independent of the choice of $s$ by the product formula of $S$.

We define the \textit{positive Arakelov degree} of $\overline E$ as
\[\adeg_+(\overline E)=\sup_{F\subseteq E}\adeg(\overline F),\]
where $\overline F$ runs over all the adelic sub-bundles of $\overline E$.

For a non-zero adelic bundle $\overline E$, we define the \textit{slope} of $\overline E$ as
\[\wmu(\overline E)=\frac{\adeg(\overline E)}{\dim(E)}.\]

In addition, we define the \textit{maximal slope} of $\overline E$ as
\[\wmu_{\max}(\overline E)=\sup_{F\subseteq E}\wmu(\overline F),\]
where $\overline F$ runs over all adelic sub-bundles of $\overline E$. Similarly, we define the \textit{minimal slope} of $\overline E$ as
\[\wmu_{\min}(\overline E)=\inf_{E\twoheadrightarrow G}\wmu(\overline G),\]
where $\overline G$ runs over all adelic quotient bundles of $\overline E$.

We refer the readers to \cite[\S 4.3]{ChenMoriwaki2020} for more details and properties of these invariants.
\subsection{Harder-Narasimhan filtration from $\mathbb R$-filtration}
In this part, we introduce a method to filtrate an adelic vector bundle by its minimal slopes, which is called the Harder-Narasimhan filtration of an adelic vector bundle. We will use the language of $\mathbb R$-filtration to formulate this construction. See \cite{Chen10b} for its origin and \cite{ChenMoriwaki2020} for the version over adelic curve.
\subsubsection{$\mathbb R$-filtrations on vector spaces}
Let $K$ be a field, and $W$ be a $K$-vector space of finite dimension. For an \textit{$\mathbb R$-filtration} on $W$, we mean a sequence $\mathcal F=\left(\mathcal F^t W\right)_{t\in\mathbb R}$ of vector subspaces of $W$, which satisfies the following conditions:
\begin{enumerate}
\item[(1)] if $t\leqslant s$, then $\mathcal F^sW\subseteq \mathcal F^tW$;
\item[(2)] $\mathcal F^tW=0$ for sufficiently large $t\in\mathbb R$, and $\mathcal F^tW=W$ for sufficiently small $t\in\mathbb R$;
\item[(3)] the function
\[\begin{array}{rcl}
\mathbb R&\longrightarrow&\mathbb R\\
t&\mapsto&\dim_K(\mathcal F^tW)
\end{array}\]
is left continuous with respect to Euclidean topology.
\end{enumerate}
\subsubsection{A measure induced by $\mathbb R$-filtration}\label{measure from R-filtration}
Let $K$ be a field. For a $K$-vector space $W$ of finite dimension equipped with an $\mathbb R$-filtration $\mathcal F$, we denote by $\nu_{(W,\mathcal F)}$ (or by $\nu_W$ if there is no ambiguity) the Borel probability measure on $\mathbb R$, which is induced from the derivative of the function
\[\begin{array}{rcl}
\mathbb R&\longrightarrow&\mathbb R\\
t&\mapsto&\displaystyle-\frac{\dim(\mathcal F^tW)}{\dim(W)}.
\end{array}\]
In fact, $\nu_{(W,\mathcal F)}$ is a linear combination of some Dirac measures. More precisely, if the $\mathbb R$-filtration $\mathcal F$ consists of a flag
\[W=W_0\supsetneq W_1\supsetneq\cdots\supsetneq W_n=0\]
together with the jumps at the real numbers $\lambda_1<\cdots<\lambda_n$, then we have
\[\nu_{(W,\mathcal F)}=\sum_{i=1}^n\frac{\dim\left(W_{i-1}/W_i\right)}{\dim(W)}\delta_{\lambda_i},\]
where $\delta_{\lambda_i}$ is the Dirac measure at $\lambda_i\in\mathbb R$.

Next, we define the function
\begin{equation}\label{norm induced by filtration}
\begin{array}{rrcl}
\lambda:& W & \longrightarrow & \mathbb R\cup\{+\infty\} \\
 & x & \mapsto & \sup\{t\in\mathbb R\mid x\in\mathcal F^t W\}.
\end{array}\end{equation}
Then we have $\lambda(W)\subset\supp(\nu_W)\cup\{+\infty\}$, and $\lambda(x)$ is finite if $x\neq0$.

In addition, we define
\begin{equation}\label{lambda_+ and lambda_max}
\lambda_+(W)=\int_0^{+\infty}x\nu_W(\mathrm d x)\text{, and }\lambda_{\max}(W)=\max\limits_{x\in W\smallsetminus\{0\}}\lambda(x).
\end{equation}
\subsubsection{Harder-Narasimhan filtration}
By the notion of $\mathbb R$-filtration, we introduce a filtration by the minimal slopes of adelic bundles. This is original from an analogical study of vector bundles on a smooth projective curve \cite{Harder-Nara}.
\begin{defi}[Harder-Narasimhan filtration]\label{definition of HN-filtration}
Let $\overline E$ be a non-zero adelic vector bundle over a proper adelic curve $S=(K,(\Omega, \mathcal A,\nu),\phi)$, where either $\mathcal A$ is a discrete $\sigma$-algebra or $K$ has a countable sub-field which is dense in $K_w$ for all $w\in\Omega$. For any $t\in\mathbb R$, let
\begin{equation*}
\mathcal F_{\hn}^t\overline E=\bigcap_{\epsilon>0}\sum_{\begin{subarray}{c}\{0\}\neq F\in\Theta(E)\\ \wmu_{\min}(\overline F)\geqslant t-\epsilon\end{subarray}} F=\sum_{\begin{subarray}{c}\{0\}\neq F\in\Theta(E)\\ \wmu_{\min}(\overline F)\geqslant t\end{subarray}} F,
\end{equation*}
where $\Theta(E)$ denotes the set of all vector subspaces of $E$, and $\overline F$ is equipped with the subspace norms. This $\mathbb R$-filtration $\mathcal F_{\hn}$ is called the \textit{Harder-Narasimhan filtration} of $\overline E$.
\end{defi}
The last equality in Definition \ref{definition of HN-filtration} is from \cite[Proposition 4.3.38]{ChenMoriwaki2020}. In fact, the Harder-Narasimhan filtration of $\overline E$ corresponds to an increasing flag of vector subspaces
\begin{equation}\label{HN flag}
\{0\}=E_1\subsetneq E_2\subsetneq\cdots\subsetneq E_n=E
\end{equation}
and a decreasing sequence of real numbers $\mu_1>\cdots>\mu_n$ represents the jumps of the $\mathbb R$-filtration $(\mathcal F_{\hn}^t\overline E)_{t\in\mathbb R}$, where
\[\mu_i=\sup\left\{t\in\mathbb R\mid \dim_K(\mathcal F^t_{\hn}\overline E)\geqslant i\right\}.\]
In particular, still by \cite[Proposition 4.3.38]{ChenMoriwaki2020}, we have $\mu_n=\wmu_{\min}(\overline E)$. The uniqueness of the flag \eqref{HN flag} is from \cite[Theorem 4.3.58]{ChenMoriwaki2020}.
\subsection{Filtrations on graded algebras}\label{appromable graded algebra}
In this part, we study some asymptotic properties of graded linear algebras by $\mathbb R$-filtration.
\subsubsection{Approximable graded algebras}
The notation of approximable graded algebra is from \cite{Chen10}, whose idea is original from \cite{Lazarsfeld_Mustata08}.
\begin{defi}[Approximable graded algebra]\label{definition of approximable algebra}
Let $K$ be a field, and $E_\bullet=\bigoplus\limits_{D\geqslant0}E_D$ be a graded $K$-algebra. We define that $E_\bullet$ is \textit{approximable} if:
\begin{enumerate}
\item[(1)] for each $D\in\mathbb N$, $E_D$ is of finite dimension, and $E_D\neq0$ for sufficiently large $D\in\mathbb N$;
\item[(2)] for every $\epsilon\in(0,1)$, there exists a $p_0\in\mathbb N^+$, such that for any $p\geqslant p_0$, we have
\[\liminf\limits_{n\to\infty}\frac{\dim_K\left(\im\left(S^nE_p\rightarrow E_{np}\right)\right)}{\dim_K(E_{np})}>1-\epsilon,\]
where $S^nE_p\rightarrow E_{np}$ is the canonical homomorphism defined by the algebra structure on $E_{\bullet}$.
\end{enumerate}
\end{defi}

\subsubsection{Convergence of measures on an approximable algebra}\label{convergence of approximable algebras}
For a graded algebra, we equip an $\mathbb R$-filtration with its each homogeneous part. If it is an approximable graded algebras, we will have some convergent properties.

Let $E_\bullet=\bigoplus\limits_{D\geqslant0}E_D$ be an approximable graded algebra. Suppose for every $D\geqslant0$, the vector space $E_D$ is equipped with an $\mathbb R$-filtration $\mathcal F$. We ignore the index $D\in\mathbb N$ since there will be no ambiguity.
\begin{defi}\label{filtrated graded linear series}
With all the above notations and conditions. If for every homogeneous elements $x_1,x_2\in E_\bullet$, we have
\[\lambda(x_1 x_2)\geqslant\lambda(x_1)+\lambda(x_2),\]
where the function $\lambda(\ndot)$ is defined at \eqref{norm induced by filtration} and the homogeneous degrees of $x_1$ and $x_2$ are ignored, then we say $E_\bullet$ is \textit{filtrated} with respect to $\mathcal F$.
\end{defi}
For every $\epsilon>0$, let $T_\epsilon$ be the operator on the space of Borel probability measures on $\mathbb R$ by mapping $x\mapsto\epsilon x$.

We have the following result of filtrated graded algebras.
\begin{prop}[Theorem 2.11, \cite{Chen10}]\label{convergence of measures}
With all the above notations. Suppose that $E_\bullet=\bigoplus\limits_{D\geqslant0}E_D$ is a filtrated approximable graded algebra with respect to $\lambda(\ndot)$ defined at \eqref{norm induced by filtration}, and
\[\sup_{n\geqslant1}\frac{\lambda_{\max}(E_n)}{n}<+\infty,\]
where $\lambda_{\max}(\ndot)$ is defined at \eqref{lambda_+ and lambda_max}. Then the sequence $\left(\frac{\lambda_{\max}(E_n)}{n}\right)_{n\in\mathbb N^+}$ converges in $\mathbb R$, and the sequence of measures $\left(T_{\frac{1}{n}}(\nu_{E_n})\right)_{n\geqslant1}$ converges weakly to a Borel probability measure on $\mathbb R$.
\end{prop}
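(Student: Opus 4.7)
The plan is to split the statement into three independent assertions and address them in sequence: (i) convergence of the scalar sequence $\lambda_{\max}(E_n)/n$; (ii) tightness of the rescaled measures $\nu_n := T_{1/n}(\nu_{E_n})$ in a fixed compact interval of $\mathbb{R}$; and (iii) uniqueness of their weak limit. The guiding idea throughout is to combine the filtration inequality $\lambda(xy)\geqslant\lambda(x)+\lambda(y)$ from Definition \ref{filtrated graded linear series} with the approximability condition of Definition \ref{definition of approximable algebra}, which controls the part of $E_{np}$ that is recovered from products of elements of $E_p$.

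For (i), the filtration inequality applied to maximizers yields
\[
\lambda_{\max}(E_{m+n})\geqslant\lambda_{\max}(E_m)+\lambda_{\max}(E_n),
\]
at least once one verifies, via approximability and a linearity argument over the highest-filtration subspace, that a non-zero product of the two maximizers can be realized in $E_{m+n}$. Fekete's lemma then gives $\lambda_{\max}(E_n)/n\to\sup_{n\geqslant 1}\lambda_{\max}(E_n)/n$, and the hypothesis $\sup_{n\geqslant 1}\lambda_{\max}(E_n)/n<+\infty$ secures finiteness of the limit.

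For (ii), the upper bound on the supports is built in, since $\supp(\nu_n)\subseteq(-\infty,\lambda_{\max}(E_n)/n]$. For a uniform lower bound, I would fix $\epsilon>0$ and choose $p\geqslant p_0$ as in Definition \ref{definition of approximable algebra}. For all sufficiently large $n$, the image of $S^nE_p\to E_{np}$ then has codimension at most $\epsilon\dim(E_{np})$; by iterating the filtration inequality, every vector in this image has $\lambda$-value at least $n\cdot a_p$, where $a_p:=\min\supp(\nu_{E_p})$ is the smallest filtration jump on $E_p$ (finite because $\dim E_p<\infty$). Consequently $\nu_{E_{np}}([na_p,+\infty))\geqslant 1-\epsilon$, and after scaling by $1/(np)$ this says that all but an $\epsilon$-fraction of the mass of $\nu_{np}$ lies in $[a_p/p,+\infty)$. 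A mild interpolation (multiplying by a fixed non-zero element of low degree) transfers tightness from the subsequence $m=np$ to all $m$, yielding a compact set supporting the bulk of every $\nu_n$.

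For (iii), I would show that any two subsequential weak limits must coincide, through a symmetric-tensor comparison. The filtration on $E_p$ induces explicit filtration values on monomials in $S^nE_p$, so $T_{1/(np)}(\nu_{S^nE_p})$ can be described combinatorially and shown to converge, as $n\to\infty$ for fixed $p$, to a measure $\mu_p$ determined by the empirical $\lambda$-spectrum of $E_p$; approximability transfers the comparison to $\nu_{np}$ with an $\epsilon$-error in total variation. Sending $p\to\infty$ should then pin down a unique limit, because increasing $p$ refines the spectral data. The main obstacle is precisely this last combinatorial step: monomials in $S^nE_p$ are counted with multiplicity one rather than with multinomial weights, so $T_{1/(np)}(\nu_{S^nE_p})$ is \emph{not} the $n$-fold convolution of $T_{1/p}(\nu_{E_p})$, and identifying its large-$n$ limit requires delicate counting of monomials with prescribed filtration sum. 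Coordinating this combinatorial estimate with the double limit $n,p\to\infty$ and absorbing the approximability loss will be where the technical weight of the proof sits.
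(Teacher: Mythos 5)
This proposition is quoted from Chen's paper \cite{Chen10} (Theorem 2.11); the present paper provides no proof, so the comparison is against Chen's argument. Your item (i) invokes Fekete for the super-additive sequence $\lambda_{\max}(E_n)$, but the inequality $\lambda_{\max}(E_{m+n})\geqslant\lambda_{\max}(E_m)+\lambda_{\max}(E_n)$ requires that a product of $\lambda$-maximizers in $E_m$ and $E_n$ be non-zero, which is not a consequence of approximability alone. You wave at this (``a linearity argument over the highest-filtration subspace''), but the actual resolution in the intended setting is that $E_\bullet$ is a graded linear series inside the section ring of a projective variety, hence an integral domain; without some version of that integrality the super-additivity simply fails, and it should be stated. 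Your item (ii) is fine in outline, though ``mild interpolation'' hides the passage from the subsequence $\{np\}$ to all $n$.

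The genuine gap is in (iii), and it is not merely the combinatorics of monomial counting you flag. The approximability hypothesis delivers only a \emph{one-sided} estimate: since $\lambda(x_1\cdots x_n)\geqslant\lambda(x_1)+\cdots+\lambda(x_n)$, the restricted filtration on $\im(S^nE_p\to E_{np})$ dominates the product filtration, so you obtain a lower bound on the right-tail CDF of $T_{1/(np)}\nu_{E_{np}}$ in terms of your measure $\mu_p$, never an upper bound. Passing $p\to\infty$ therefore does not ``pin down a unique limit''; nothing prevents two subsequential limits that both dominate the increasing family $(1-\epsilon)\mu_p$. What Chen actually does (following \cite{Lazarsfeld_Mustata08} and \cite{Boucksom_Chen}) is to fix $t\in\mathbb{R}$ and consider the truncated sub-algebra $E_\bullet^t:=\bigoplus_n\mathcal{F}^{nt}E_n$, which is a graded sub-algebra by the filtration inequality. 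One shows that for $t$ below the asymptotic maximal slope, $E_\bullet^t$ inherits approximability and hence $\dim(\mathcal{F}^{nt}E_n)$ grows like $\vol(E_\bullet^t)\,n^d/d!$ with a genuine limit. The limiting CDF is then identified as $t\mapsto\vol(E_\bullet^t)/\vol(E_\bullet)$, which gives uniqueness directly. This is exactly the device the present paper exploits in \eqref{graded linear series modified by lambda} and Proposition \ref{contain an ample divisor}. Your approach via $\nu_{S^nE_p}$ never controls the right tails from above, so without importing that truncation device (or the Okounkov-body convexity that underlies it) the weak convergence does not follow.
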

\subsubsection{Limits of a measure from filtrated algebras}
We consider the filtrated approximable graded $K$-algebra $E_\bullet=\bigoplus\limits_{D\geqslant0}E_D$. Let $\nu_{E_\bullet}$ be the limit of the sequence of measures $\left(T_{\frac{1}{n}}(\nu_{E_n})\right)_{n\geqslant1}$, and $\lambda^{\asy}_{\max}(E_\bullet)=\lim\limits_{n\to\infty}\frac{\lambda_{\max}(E_n)}{n}$, whose existences are assured by Proposition \ref{convergence of measures}. In addition, we have
\begin{equation}\label{asymptotic lambda+}
\lambda_+^{\asy}(E_\bullet):=\int_0^{+\infty}x\nu_{E_\bullet}(\mathrm dx)=\lim\limits_{n\to\infty}\frac{\lambda_+(E_n)}{n},
\end{equation}
where $\lambda_+(\ndot)$ is defined at \eqref{lambda_+ and lambda_max}. See \cite[\S 2.4]{Chen10} for more details.

We have the following result.
\begin{prop}[Theorem 2.13, \cite{Chen10}]\label{asymptotic maximal of lambda}
Let $E_\bullet$ be a filtrated approximable graded $K$-algebra. Then $\lambda_+^{\asy}(E_\bullet)>0$ if and only if $\lambda_{\max}^{\asy}(E_\bullet)>0$.
\end{prop}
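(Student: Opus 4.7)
The first direction is straightforward: since $\nu_{E_n}$ is a probability measure supported in $(-\infty,\lambda_{\max}(E_n)]$, one has $\lambda_+(E_n)=\int_0^{+\infty}x\,\nu_{E_n}(\mathrm{d}x)\leqslant\max(\lambda_{\max}(E_n),0)$. Dividing by $n$ and passing to the limit using Proposition \ref{convergence of measures} yields $\lambda_+^{\asy}(E_\bullet)\leqslant\max(\lambda_{\max}^{\asy}(E_\bullet),0)$, so $\lambda_+^{\asy}(E_\bullet)>0$ immediately forces $\lambda_{\max}^{\asy}(E_\bullet)>0$.

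For the reverse direction, the plan is to exploit the filtration property $\lambda(xy)\geqslant\lambda(x)+\lambda(y)$ together with approximability to transport positive $\lambda$-values along a positive fraction of the dimension of $E_N$ for large $N$. Assume $c:=\lambda_{\max}^{\asy}(E_\bullet)>0$ and fix $\delta\in(0,c)$. Since $\lambda_{\max}(E_p)/p\to c$, for $p$ sufficiently large there exists $x\in E_p$ with $\lambda(x)\geqslant(c-\delta)p$, and then $\lambda(x^ky)\geqslant k(c-\delta)p+\lambda(y)$ for every $y\in E_m$. By Proposition \ref{convergence of measures}, the family $\bigl(T_{1/n}\nu_{E_n}\bigr)_{n\geqslant1}$ is tight, so for any $\eta>0$ one may fix $M>0$ with $\dim\mathcal{F}^{-Mm}E_m\geqslant(1-\eta)\dim E_m$ for all $m$ sufficiently large.

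Given these choices, for a small parameter $\beta>0$ and large $N=kp+r$ with $r=\lfloor\beta N\rfloor$, the candidate subspace is $W_N:=x^k\cdot\mathcal{F}^{-Mr}E_r\subseteq E_N$. The filtration inequality forces every nonzero $z\in W_N$ to satisfy $\lambda(z)\geqslant k(c-\delta)p-Mr\geqslant N\bigl[(1-\beta)(c-\delta)-M\beta\bigr]$, which is bounded below by $c'N$ for some $c'>0$ provided $\beta<(c-\delta)/(c-\delta+M)$. Hence $\lambda_+(E_N)\geqslant c'N\cdot\dim W_N/\dim E_N$, and the whole argument reduces to showing that $\dim W_N/\dim E_N$ is bounded below by a positive constant independent of $N$.

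This last dimension control is the main obstacle. In the case where $E_\bullet$ is a domain---the situation of greatest interest, arising from the section ring of a big line bundle on an integral variety---multiplication by $x^k$ is injective on $\mathcal{F}^{-Mr}E_r$, so $\dim W_N\geqslant(1-\eta)\dim E_r$, and combined with the polynomial growth of $\dim E_m$ (itself a consequence of approximability via a Hilbert-function estimate) this produces the desired uniform lower bound $\dim W_N/\dim E_N\geqslant c''(1-\eta)\beta^{\dim X}$. In the general situation, one would bypass the domain hypothesis by working with the tensor filtration on $E_p^{\otimes n}$ and using the defining property of approximable graded algebras to guarantee that, after multiplication by $x^k$, a positive fraction of $\dim E_{np}$ is still covered; either route yields a uniform positive lower bound on $\lambda_+(E_N)/N$, whence $\lambda_+^{\asy}(E_\bullet)>0$.
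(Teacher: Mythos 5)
The forward implication is fine: $\nu_{E_n}$ is supported in $(-\infty,\lambda_{\max}(E_n)]$, so $\lambda_+(E_n)\leqslant\max(\lambda_{\max}(E_n),0)$, and dividing by $n$ and passing to the limit gives $\lambda_+^{\asy}(E_\bullet)\leqslant\max(\lambda_{\max}^{\asy}(E_\bullet),0)$. That direction is correct and matches the standard argument.

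The reverse implication, however, has a genuine gap, and it is precisely the one you flag as ``the main obstacle.'' Your estimate
\[
\frac{\lambda_+(E_N)}{N}\;\geqslant\; c'\cdot\frac{\dim W_N}{\dim E_N},\qquad W_N=x^k\cdot\mathcal F^{-Mr}E_r,
\]
is correct once one has $W_N\subseteq\mathcal F^{c'N}E_N$, but everything then rests on showing $\dim W_N/\dim E_N$ is bounded away from $0$. Two separate problems arise here. First, even assuming $E_\bullet$ is an integral domain so that multiplication by $x^k$ is injective, you need $\dim E_r/\dim E_N$ to be bounded below for $r\approx\beta N$, and this requires $\dim E_m$ to grow polynomially with \emph{matched} lower and upper bounds of the same exponent. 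Approximability (Definition \ref{definition of approximable algebra}) gives a polynomial \emph{upper} bound on $\dim E_{np}$ via $\dim E_{np}\leqslant(1-\epsilon)^{-1}\dim S^nE_p$, but it gives no lower bound at all; the rank-one example $\dim E_m\equiv 1$ is approximable, so a lower bound of matching degree simply does not follow from the hypotheses. Your parenthetical remark that polynomial growth is ``itself a consequence of approximability via a Hilbert-function estimate'' is therefore not justified as stated, and this is exactly where the argument breaks. Second, the proposition is stated for a general filtrated approximable graded $K$-algebra, with no integrality assumption, and the proposed bypass (``working with the tensor filtration on $E_p^{\otimes n}$'') is only a sketch of an intention, not an argument.

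For context, this proposition is quoted verbatim from \cite{Chen10} (Theorem 2.13) and the paper does not reprove it, so the comparison is against Chen's proof. Chen's route is more structural: for $0<a<\lambda_{\max}^{\asy}(E_\bullet)$ one considers the graded sub-algebra $E^a_\bullet$ with $E^a_n=\mathcal F^{an}E_n$ (which is a sub-algebra by the filtration inequality), shows that it inherits approximability and carries a positive fraction of the asymptotic volume, and then reads off $\lambda_+^{\asy}(E_\bullet)\geqslant a\cdot\vol(E^a_\bullet)/\vol(E_\bullet)>0$. This sidesteps the need for a pointwise lower bound $\dim E_r/\dim E_N\geqslant\text{const}$ by working at the level of limiting volumes, which is exactly the quantity that approximability controls. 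Your multiply-by-$x^k$ idea is in the same spirit, but without filling the dimension gap it does not constitute a proof.
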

\section{Graded linear series of global section spaces}\label{Chap. 3}
Let $S$ be an adelic curve, $X$ be a projective scheme over $\spec K$, and $\overline{L}$ be an adelic line bundle on $X$ over $S$. For our purpose, we will construct some graded linear series consisting of some particular adelic vector bundles over $S$ from above data.
\subsection{Adelic line bundles on an arithmetic variety}
First, we recall some notions about arithmetic positivities of the adelic line bundles on arithmetic variety over an adelic curve, which are original from \cite{ChenIkoma2020, ChenMoriwaki2022}. We refer the readers to \cite[\S 6 -- \S 9]{ChenMoriwaki2022} for a self-contained introduction to these notions.
\subsubsection{Adelic line bundle}\label{adelic line bundle on variety}
Let $S=(K,(\Omega,\mathcal A, \nu),\phi)$ be an adelic curve, and $X$ be a projective scheme over $\spec K$. We say that $\overline L=(L,\varphi)$ is an \textit{adelic line bundle} on $X$, if $L$ is a line bundle on $X$, and $\varphi=(\|\ndot\|_w)_{w\in\Omega}$ is a family of metrics on $L_w$ for each $w\in\Omega$, which is dominated and measurable.

In addition, we suppose that either $\mathcal A$ is a discrete $\sigma$-algebra or $K$ contains a countable sub-field which is dense in $K_w$ for all $w\in\Omega$. Then we construct a family of norms on $H^0(X,L)$ in the way below: for every $w\in\Omega$ and $x\in X_w^{\an}$, define
\[\begin{array}{rrcl}
|\ndot|_{\varphi_w}(x):&H^0(X_w,L_w)&\longrightarrow&\mathbb R\\
&s&\mapsto&|s|_{\varphi_w}(x)
\end{array}\]
as the value of the global section $s$ at the point $x$ with respect to the given metric, and
\begin{equation}\label{definition of sup norm}
\begin{array}{rrcl}
\|\ndot\|_{\varphi_w}:&H^0(X_w,L_w)&\longrightarrow&\mathbb R\\
&s&\mapsto&\sup\limits_{x\in X^{\an}_w}\{|s|_{\varphi_w}(x)\}.
\end{array}
\end{equation}
We denote $\varphi=(\|\ndot\|_{\varphi_w})_{w\in\Omega}$ the norm family on $H^0(X,L)$ over $S$ if there is no ambiguity. By \cite[Theorem 6.1.13, Theorem 6.1.32]{ChenMoriwaki2020}, $(H^0(X,L),\varphi)$ is an adelic vector bundle over $S$.

Let $\overline L=(L,\varphi)$ and $\overline M=(M,\psi)$ be two adelic line bundles on $X$. We denote by $\overline L\otimes \overline M=(L\otimes M,\varphi+\psi)$ the tensor adelic line bundle, where $\varphi+\psi$ is the tensor product of metrics. In particular, for $n\in\mathbb N$, we denote by $\overline L^{\otimes n}=(L^{\otimes n},n\varphi)$ the adelic line bundle of $n$-times tensor product of $\overline L$, and by $(H^0(X,L^{\otimes n}),n\varphi)$ the adelic vector bundle over $S$ defined above. By \cite[Proposition 6.1.28, Proposition 6.1.30]{ChenMoriwaki2020}, all the above constructions are all well-defined.

We refer the readers to \cite[\S 6.1]{ChenMoriwaki2020} for a precise explanation of \S \ref{adelic line bundle on variety}, see also \cite[\S 2.6 -- \S 2.8]{ChenMoriwaki2022}.
\subsubsection{Semi-positive metric}
Let $S=(K,(\Omega,\mathcal A, \nu),\phi)$ be an adelic curve, $X$ be a projective scheme over $\spec K$, $L$ be a line bundle on $X$, $\varphi=(\varphi_w)_{w\in\Omega}$ and $\psi=(\psi_w)_{w\in\Omega}$ be two families of metrics on $L$ such that $(L,\varphi)$ and $(L,\psi)$ are both adelic line bundles. Then we define the \textit{distance} between $\varphi$ and $\psi$ as
\[d(\varphi,\psi)=\int_\Omega \sup_{x\in X^{\an}_w}\left|\log \frac{|\ndot|_{\varphi_w}(x)}{|\ndot|_{\psi_w}(x)}\right|\nu(\mathrm d w).\]
If $L$ is semi-ample, and if there exist a positive integer $m$ and a sequence $(\varphi_n)_{n\in\mathbb N}$ of quotient metric families (where $\varphi_n$ a metric family of $L^{\otimes mn}$), such that
\[\lim_{n\to\infty}\frac{1}{mn}d(nm\varphi,\varphi_n)=0,\]
we say that the metric family $\varphi$ is \textit{semi-positive}. See \cite[Definition 2.2.21]{ChenMoriwaki2020} for the details of the construction of a quotient norm family.
\subsubsection{Ampleness}\label{definition of ample}
Let $\overline L=(L,\varphi)$ be an adelic line bundle on $X$ over $S$. We say $\overline L$ is \textit{relatively ample} if $L$ is ample and $\varphi$ is semi-positive.

In addition, an adelic line bundle $\overline L$ on $X$ is \textit{ample}, if it is relatively
ample and if there exists an $\epsilon>0$ such that the inequality
\[(\overline L|_Y^{\dim(Y)+1})\geqslant\epsilon\deg_L(Y)(\dim(Y)+1)\]
holds for any integral closed subscheme $Y$ of $X$, where $(\overline L|_Y^{\dim(Y)+1})$ is the arithmetic intersection number studied in \cite{ChenMoriwaki2021}. We also call it \textit{arithmetically ample} to distinguish it from the geometric case.
\begin{rema}
We compare the above definition with the arithmetically ample definition of S. Zhang in \cite{Zhang95}, where an arithmetically ample Hermitian line bundle on an arithmetic variety requires the metrics on the line bundle to be smooth. The definition in \cite[Definition 9.1.1]{ChenMoriwaki2022} over an adelic curve only require continuous metrics. In this article, we follow the definition given in \cite[Definition 9.1.1]{ChenMoriwaki2022} as above.
\end{rema}
\subsubsection{Nefness}\label{definition of nef}
Let $\overline L=(L,\varphi)$ be an adelic line bundle on $X$ over $S$. We say that $\overline L$ is \textit{relatively nef} if there exists a relatively ample adelic line bundle $\overline A$ on $X$ and a positive integer $N$ such that, for any $n\geqslant N$, the tensor product of adelic line bundles $\overline L^{\otimes n}\otimes \overline A$ is relatively ample.

An adelic line bundle $\overline L$ is said to be \textit{nef} or \textit{arithmetically nef}, if there exist an arithmetically ample line bundle $\overline A$ and a positive integer $M$ such that $\overline L^{\otimes m}\otimes \overline A$ is ample for any $m\geqslant M$ in the arithmetic sense.
\subsubsection{Effectivity}\label{definition of effective}
Let $\overline L=(L,\varphi)$ be an adelic line bundle on $X$ over $S$. If $s\in H^0(X,L)$ is a non-zero global section which satisfies $\|s\|_{\varphi_w}\leqslant 1$ for all $w\in\Omega$, we say that the global section $s$ is \textit{small}, or $s$ is a \textit{small section}.

We say that an adelic line bundle $\overline L$ \textit{has a non-zero small section}, if it admits at least one small global section by the above definition.
\begin{rema}
In \cite[Definition 8.5.4]{ChenMoriwaki2022}, the above $\overline L$ is said to be \textit{effective} in the arithmetic sense, but this notion is only used in \S 8.5 and \S 8.6 of \cite{ChenMoriwaki2022}. In order to avoid the confusion, we use the word “$\overline{L}$ has a non-zero small section” instead of \cite[Definition 8.5.4]{ChenMoriwaki2022} in this article, because some multiple of an arithmetically big adelic line bundle is not necessarily effective. This is different from the classic case over a number field. 
\end{rema}
\subsection{Algebraic construction of graded linear series}
In this part we will only consider the geometric setting, which means everything works over an arbitrary field with no metric. For every $D\in\mathbb N^+$, we consider the global section space $H^0(X,L^{\otimes D})$, and the graded $K$-algebra $\bigoplus\limits_{n\geqslant 0}H^0(X,L^{\otimes n})$.
\begin{defi}[Graded linear series]
Let $X$ be a projective variety over $K$, and $L$ be a line bundle on $X$. For a \textit{graded linear series} of $L$, we mean a graded sub-$K$-algebra of $\bigoplus\limits_{n\geqslant 0}H^0(X,L^{\otimes n})$.
\end{defi}
\subsubsection{Graded linear series containing an ample divisor}
The following definition is original from \cite[Definition 2.9]{Lazarsfeld_Mustata08}, where it is called "Condition (C)". This notion plays a key role in the arithmetic Fujita approximation of \cite{Chen10}, and we follow the notation of \cite[Definition 3.1]{Chen10}.
\begin{defi}\label{contain an ample divisor}
Let $X$ be a projective variety, and $L$ be a line bundle on $X$. Suppose that there exists a $p\in\mathbb N^+$, an ample line bundle $A$, an effective line bundle $M$ on $X$, and a non-zero section $s\in H^0(X,M)$, such that $L^{\otimes p}=A\otimes M$. In this case, we say a graded linear series $W_\bullet=\bigoplus\limits_{n\geqslant0}W_n$ of $L$ \textit{contains an ample divisor} if the homomorphism of graded algebras
\[\bigoplus_{n\geqslant0}H^0(X,A^{\otimes n})\rightarrow \bigoplus_{n\geqslant0}H^0(X,L^{\otimes np})\]
induced by $s$ can factor through $\bigoplus\limits_{n\geqslant0}W_{np}$.
\end{defi}
In \S \ref{convergence of approximable algebras}, we have seen that a filtrated approximable graded linear series has many convergent properties. The following property is original from \cite{Lazarsfeld_Mustata08}, which is applied in \cite{Chen10} to judge whether a graded linear series is approximable.
\begin{prop}[Theorem 3.4, \cite{Chen10}]\label{contain an ample divisor => approximable}
If $W_\bullet=\bigoplus\limits_{n\geqslant0}W_n$ contains an ample divisor and $W_n\neq0$ for sufficiently large $n$, then $W_\bullet$ is approximable.
\end{prop}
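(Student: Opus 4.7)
The plan is to follow the strategy of Lazarsfeld--Mustata \cite{Lazarsfeld_Mustata08} and adapt it to the notion of approximability used here. The hypothesis provides $p_0 \in \mathbb N^+$, an ample line bundle $A$, an effective line bundle $M$ and a non-zero section $s \in H^0(X,M)$ with $L^{\otimes p_0} = A \otimes M$, together with injections $H^0(X,A^{\otimes n}) \hookrightarrow W_{np_0}$ given by multiplication by $s^n$, for every $n \geq 0$. The idea I would pursue is to exploit the embedded ample sub-algebra $\bigoplus_n H^0(X,A^{\otimes n}) \cdot s^n \subseteq \bigoplus_n W_{np_0}$ to produce a large sub-image inside $S^n W_p \to W_{np}$, and then compare with $\dim_K W_{np}$ by means of the Okounkov body construction.

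First, since $A$ is ample, the section ring $\bigoplus_n H^0(X,A^{\otimes n})$ is a finitely generated graded $K$-algebra, so there exists $k_0 \in \mathbb N^+$ such that for every $k \geq k_0$, the Veronese subring $\bigoplus_n H^0(X,A^{\otimes nk})$ is generated in degree one; equivalently, $S^n H^0(X,A^{\otimes k}) \twoheadrightarrow H^0(X,A^{\otimes nk})$ for all $n \geq 1$. Setting $p := kp_0$ with $k \geq k_0$, the inclusion $H^0(X,A^{\otimes k}) \cdot s^k \subseteq W_p$ yields
\[
\im\bigl(S^n W_p \to W_{np}\bigr) \supseteq H^0(X,A^{\otimes nk}) \cdot s^{nk},
\]
and the right-hand side has dimension $\sim (nk)^d \vol(A)/d!$ as $n \to \infty$ by asymptotic Riemann--Roch for the ample line bundle $A$, where $d = \dim(X)$.

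Next, I would attach an Okounkov body $\Delta(W_\bullet) \subseteq \mathbb R^d$ to $W_\bullet$ through an admissible flag valuation on $X$. The ample-divisor hypothesis guarantees that $\Delta(W_\bullet)$ is a bounded convex body of positive Euclidean volume equal to $\lim_n \dim_K W_n/n^d$, and moreover that the sub-bodies $\Delta(W^{(p)}_\bullet) \subseteq \Delta(W_\bullet)$ attached to the sub-algebras generated by $W_p$ converge to $\Delta(W_\bullet)$ in Euclidean volume as $p \to \infty$. Combining this with the lower bound above yields the approximability inequality along the arithmetic progression $p = kp_0$; a monotonicity argument based on the graded-algebra inclusions $\im(S^{nk}W_{p_0} \to W_{nkp_0}) \subseteq \im(S^n W_{kp_0} \to W_{nkp_0})$ then propagates the estimate to all sufficiently large $p$.

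The hardest step will be the Okounkov-body convergence: proving that the convex sub-bodies attached to the finitely generated Veronese sub-algebras exhaust $\Delta(W_\bullet)$ in Euclidean volume requires an adapted choice of admissible flag on $X$ and a delicate lattice-point count, and constitutes the technical heart of the Lazarsfeld--Mustata approach that I would import here. Once this convergence is granted, the combination of the explicit lower bound from the ample sub-algebra and the matching upper bound from $\Delta(W_\bullet)$ delivers the approximability of $W_\bullet$ in the sense of Definition \ref{definition of approximable algebra}.
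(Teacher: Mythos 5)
Your strategy mirrors the route the paper itself implicitly relies on: the proposition is stated here as a citation of \cite[Theorem 3.4]{Chen10}, whose proof rests precisely on the Okounkov--body convergence results of \cite{Lazarsfeld_Mustata08}. So the plan --- embed the ample section ring via $s$, pass to a Veronese generated in degree one, and then compare $\Delta(W^{(p)}_\bullet)$ with $\Delta(W_\bullet)$ --- reconstructs the proof of the cited result.

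Two points in the write-up deserve a clean-up, though. First, the ``explicit lower bound'' you extract from the ample sub-algebra, namely $\dim_K\im(S^n W_p\to W_{np})\geqslant \dim_K H^0(X,A^{\otimes nk})\sim (nk)^d\vol(A)/d!$ with $p=kp_0$, only bounds the relevant ratio below by roughly $\vol(A)/(p_0^d\vol(W_\bullet))$, a fixed positive constant that does \emph{not} tend to $1$ as $p\to\infty$. It therefore cannot be ``combined'' with the Okounkov--body statement to give approximability; the entire content is in the convergence $\vol\Delta(W^{(p)}_\bullet)\to\vol\Delta(W_\bullet)$, and the ample sub-algebra enters as an \emph{ingredient} of that Lazarsfeld--Mustata convergence proof (it supplies the multiplicative structure needed for the lattice-point count), not as a separate inequality to be juxtaposed afterwards. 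Second, the inclusion $\im(S^{nk}W_{p_0}\to W_{nkp_0})\subseteq\im(S^n W_{kp_0}\to W_{nkp_0})$ gives monotonicity only along multiples of a fixed $p_0$, whereas Definition \ref{definition of approximable algebra} requires the inequality for \emph{every} $p$ beyond some threshold; to cover a general large $p$ one either runs the convergence argument directly for arbitrary $p$, or uses the finer monotonicity established in \cite{Lazarsfeld_Mustata08}. These are matters of exposition rather than a wrong approach: the skeleton is the right one, and is the same as in the reference the paper cites.
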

\subsubsection{Big line bundles}\label{geometric big graded linear series}
Let $W_\bullet=\bigoplus\limits_{n\geqslant0}W_n$ be a graded linear series of $L$. We denote
\[\vol(W_\bullet)=\limsup_{n\to\infty}\frac{\dim_K(W_n)}{n^{\dim(X)}/\dim(X)!}.\]
If $W_n=H^0(X,L^{\otimes n})$, we denote $\vol(L)=\vol(W_\bullet)$.

If $\vol(L)>0$, we say that $L$ is a \textit{big} line bundle on $X$.
\subsection{Arithmetic invariants of graded linear series}
In this part, we consider some arithmetic constructions.
\subsubsection{}
Let $\overline L=(L,\varphi)$ be an adelic line bundle on $X$ over a proper adelic curve $S=(K,(\Omega,\mathcal A,\nu),\phi)$, where either $\mathcal A$ is a discrete $\sigma$-algebra or $K$ contains a countable sub-field which is dense in $K_w$ for all $w\in\Omega$. In \S \ref{adelic line bundle on variety}, we defined a norm family on $H^0(X,L^{\otimes D})$ by \eqref{definition of sup norm} indexed by $\Omega$, which is denoted by $D\varphi$ with no ambiguity. By \cite[\S 6.4.1]{ChenMoriwaki2020}, $\|\ndot\|_{D\varphi_w}$ is ultrametric for every $w\in\Omega_f$. Then $(H^0(X,L^{\otimes D}),D\varphi)$ is an adelic vector bundle over $S$.

For every $D\in\mathbb N$, let $\overline E_D$ be an adelic vector sub-bundle of $H^0(X,L^{\otimes D})$ over $S$. In this case, we say that $\overline E_{\bullet}=\bigoplus\limits_{D\in\mathbb N}\overline E_D$ is a \textit{normed graded linear series} over $S$ if $E_D\cdot E_{D'}\subseteq E_{D+D'}$ for $D,D'\in\mathbb N$.
\subsubsection{}
Let $\overline E_{\bullet}=\bigoplus\limits_{D\in\mathbb N}\overline E_D$ be a normed graded linear series over $S$. We define the arithmetic volume of $\overline E_\bullet$ as
\begin{equation}
\widehat{\vol}(\overline E_\bullet)=\limsup_{n\to\infty}\frac{\adeg_+\left(\overline E_n\right)}{n^{\dim(X)+1}/(\dim(X)+1)!}.
\end{equation}
In particular, if $\overline E_n=\left(H^0(X,L^{\otimes n}),n\varphi\right)$ as above, we denote $\widehat{\vol}(\overline L)=\widehat{\vol}(\overline E_\bullet)$ for simplicity.

If $\widehat{\vol}(\overline L)>0$, we say that $\overline L$ is \textit{big} on $X$, or \textit{arithmetically big} to distinguish it from the geometric case.

\subsubsection{}
For every $D\in\mathbb N$, let $\left(\mathcal F^t_{\hn}(\overline E_D)\right)_{t\in\mathbb R}$ be the $\mathbb R$-filtration induced by the Harder-Narasimhan filtration of $\overline E_D$ following Definition \ref{definition of HN-filtration}, where we ignore the homogeneous degree $D\in\mathbb N$ of $\overline E_D$.

We denote by $\nu_{\overline E_D}$ the Borel probability measure induced by the Harder-Narasimhan filtration of $\overline E_D$ constructed in the sense of \S \ref{measure from R-filtration}. Then we have the result below from Proposition 4.3.50, Proposition 4.3.51 and Corollary 4.3.52 of \cite{ChenMoriwaki2020}, see also \cite[\S 2.5.4]{ChenMoriwaki2022}.
\begin{prop}\label{Estimates by R-filtration}
With all the above notations and conditions. We have
\[0\leqslant\adeg_+(\overline E_D)-\int_0^{+\infty}x\nu_{\overline E_D}(\mathrm dx)\leqslant\frac{1}{2}\nu(\Omega_\infty)\dim_K(E_D)\log(\dim_K(E_D)).\]
\end{prop}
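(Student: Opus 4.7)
The plan is to exploit the Harder--Narasimhan flag of $\overline E_D$ to translate both sides of the inequality into combinatorial data. Let
\[
0 = E^{(0)}\subsetneq E^{(1)}\subsetneq \cdots \subsetneq E^{(n)} = E_D
\]
be the Harder--Narasimhan flag of Definition \ref{definition of HN-filtration} with jump slopes $\mu_1 > \cdots > \mu_n$, and write $i^{\ast}$ for the largest index with $\mu_{i^{\ast}}>0$. By the description of $\nu_{\overline E_D}$ in \S \ref{measure from R-filtration}, the integral $\int_0^{+\infty} x\,\nu_{\overline E_D}(\mathrm d x)$ is the weighted sum of the positive $\mu_i$ against the jump dimensions, so the proposition reduces to comparing $\widehat{\deg}_+(\overline E_D)$ with $\sum_{i\leqslant i^{\ast}}\mu_i\dim(E^{(i)}/E^{(i-1)})$ (up to the normalization implicit in $\nu_{\overline E_D}$).

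For the lower bound I take the adelic sub-bundle $\overline F = \overline{E^{(i^{\ast})}}$, so that $\widehat{\deg}_+(\overline E_D) \geqslant \widehat{\deg}(\overline F)$ by definition. Iterating the sub-additivity of Arakelov degree in short exact sequences (equality at non-archimedean places, and the one-sided inequality $\|s_A\wedge s_C\|_{\det,w} \leqslant \|s_A\|_{\det,w}\|s_C\|_{\det,w}$ at archimedean places, which points in the favorable direction), and using that each HN subquotient $\overline{E^{(i)}/E^{(i-1)}}$ is semistable of slope exactly $\mu_i$ and hence has degree $\mu_i\dim(E^{(i)}/E^{(i-1)})$, yields the clean inequality $\widehat{\deg}(\overline F) \geqslant \int_0^{+\infty} x\,\nu_{\overline E_D}(\mathrm d x)$ with no error term.

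For the upper bound, given any adelic sub-bundle $\overline G \subseteq \overline E_D$, I filter $G$ by $G_i := G\cap E^{(i)}$; each $\overline{G_i/G_{i-1}}$ embeds into the semistable $\overline{E^{(i)}/E^{(i-1)}}$ and hence satisfies $\widehat{\deg}(\overline{G_i/G_{i-1}}) \leqslant \mu_i\dim(G_i/G_{i-1})$. The direction of sub-additivity used above now goes the wrong way, so I invoke the reverse archimedean estimate
\[
\|s_A\|_{\det,w}\|s_C\|_{\det,w}\leqslant c_{k,j,w}\,\|s_A\wedge s_C\|_{\det,w},
\]
where $c_{k,j,w}=1$ for $w\in\Omega_f$ and $c_{k,j,w}$ is bounded by a combinatorial factor of the form $\binom{k}{j}^{1/2}$ for $w\in\Omega_\infty$. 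Summing along the filtration yields
\[
\widehat{\deg}(\overline G) \leqslant \sum_{i=1}^n \mu_i\dim(G_i/G_{i-1}) + R,
\]
and the main term is maximized over choices of $\overline G$ by taking $G = E^{(i^{\ast})}$, since $\mu_i>0$ precisely for $i\leqslant i^{\ast}$ and $\dim(G_i/G_{i-1}) \leqslant \dim(E^{(i)}/E^{(i-1)})$ termwise.

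The principal obstacle is controlling the archimedean error $R$. Refining the HN flag to a full composition series and applying the archimedean distortion estimate at each elementary extension contributes a term $\tfrac{1}{2}\log\binom{k_i}{j_i}$ at each step and each archimedean place. Using the crude bound $\sum_i\log\binom{k_i}{j_i}\leqslant \dim(E_D)\log\dim(E_D)$ and integrating against $\nu$ over $\Omega_\infty$ produces the stated total error $\tfrac{1}{2}\nu(\Omega_\infty)\dim_K(E_D)\log\dim_K(E_D)$. This reproduces the archimedean slope estimates underlying Propositions 4.3.50--4.3.51 and Corollary 4.3.52 of \cite{ChenMoriwaki2020}.
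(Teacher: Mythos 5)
The paper offers no proof of this proposition: it is stated as a direct consequence of Propositions 4.3.50, 4.3.51 and Corollary 4.3.52 of \cite{ChenMoriwaki2020}, so there is no argument here to compare against line by line. Your proposal instead reconstructs the underlying argument, and the reconstruction is essentially correct: for the lower bound you take the positive part $\mathcal F^0_{\hn}\overline E_D$ of the Harder--Narasimhan filtration and use that the determinant norm in a short exact sequence $0\to \overline A\to \overline B\to \overline C\to 0$ (sub-norm on $A$, quotient norm on $C$) satisfies $\|s_A\wedge s_C\|_{\det,w}\leqslant\|s_A\|_{\det,w}\|s_C\|_{\det,w}$, hence $\adeg(\overline B)\geqslant\adeg(\overline A)+\adeg(\overline C)$, which telescopes cleanly with no archimedean defect; for the upper bound you intersect an arbitrary sub-bundle with the HN flag, bound each piece by semistability, and control the reverse determinant inequality by a factor $\binom{k}{j}^{1/2}$ at archimedean places, summing to $\tfrac12\log\big((\dim E_D)!\big)\leqslant \tfrac12\dim E_D\log\dim E_D$ per archimedean place. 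This is precisely the content of the results the paper cites.

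Two points deserve to be made explicit rather than waved at. First, as you note parenthetically, the quantity $\int_0^{+\infty}x\,\nu_{\overline E_D}(\mathrm dx)$ in the paper's display is taken against a \emph{probability} measure, so the combinatorial sum you work with is $\sum_{\mu_i>0}\mu_i\dim(E^{(i)}/E^{(i-1)})=\dim_K(E_D)\int_0^{+\infty}x\,\nu_{\overline E_D}(\mathrm dx)$; the statement as printed is missing a factor $\dim_K(E_D)$ in front of the integral (this is consistent with how Proposition \ref{computation of volume from asymptotic lambda+} is derived from it, and appears to be a typo in the paper rather than an error on your side). Second, the step ``each HN subquotient is semistable of slope exactly $\mu_i$, hence has degree $\mu_i\dim$'' and the direction of the archimedean determinant inequality $\|s_A\wedge s_C\|_{\det,w}\leqslant\|s_A\|_{\det,w}\|s_C\|_{\det,w}$ are both correct in the \cite{ChenMoriwaki2020} framework but are nontrivial facts there (Theorem 4.3.58 and the determinant-norm comparisons of their \S 1.2, respectively), and should be cited rather than asserted, since in the adelic-curve setting the archimedean norms need not be Hermitian and exact additivity of $\adeg$ fails. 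With those citations inserted, your argument is a faithful expansion of the reference the paper leans on.
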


\subsection{Graded linear series over trivially valued fields}
Let $K$ be a field, $S=(K,(\Omega,\mathcal A,\nu),\phi)$ be a proper adelic curve where either $\mathcal A$ is a discrete $\sigma$-algebra or $K$ has a sub-field which is dense in $K_w$ for all $w\in\Omega$, and $S_0=(K,\{0\},|\ndot|_0)$ be the adelic curve of trivially valued field introduced in Example \ref{example: trivially valued field}.

In fact, for any normed vector space over $(K,|\ndot|_0)$ of finite dimension, its norm takes only finitely many values (cf. \cite[\S 3.1]{ChenMoriwaki2022}).

In this part, we will study the adelic vector bundles and graded linear series over $S_0$ following that in \cite[\S 3]{ChenMoriwaki2022}. Some constructions are from those over $S$.
\subsubsection{}
Let $\overline E$ be an adelic vector bundle over an adelic curve $S=(K,(\Omega,\mathcal A,\nu),\phi)$, and $\left(\mathcal F^t_{\hn}(\overline E)\right)_{t\in\mathbb R}$ be the Harder-Narasimhan filtration of $\overline E$ defined in Definition \ref{definition of HN-filtration}.
 We define the norm $\|\ndot\|_{\hn}$ on $E$ over $(K,|\ndot|_0)$ as
\begin{equation}\label{Harder-Narasimhan norm over S_0}
\begin{array}{rrcl}
\|\ndot\|_{\hn}:&E&\longrightarrow&\mathbb R^+\\
&s&\mapsto&\exp\left(-\sup\{t\in\mathbb R\mid s\in\mathcal F^t_{\hn}(\overline E)\}\right).
\end{array}
\end{equation}
Then $\overline E_{\hn}=\left(E,\|\ndot\|_{\hn}\right)$ is an adelic vector bundle over $S_0$.

\subsubsection{}\label{from original series to spectral norm series}
Let $\overline E_{\bullet}=\bigoplus\limits_{D\in\mathbb N}\overline E_D$ be a normed graded linear series over an adelic curve $S=(K,(\Omega,\mathcal A,\nu),\phi)$. For every $\overline E_D$, we denote by $\|\ndot\|_{D}^{\hn}$ the norm \eqref{Harder-Narasimhan norm over S_0} on $E_D$ indexed by $D\in\mathbb N$. Then $(E_D,\|\ndot\|_D^{\hn})$ an adelic vector bundle over $S_0$.

By \cite[Proposition 4.1.3]{ChenMoriwaki2022}, for every $s\in E_m$ and $t\in E_n$, we have
\[\|s\cdot t\|^{\hn}_{m+n}\leqslant\exp\left(f(\dim E_m)+f(\dim E_n)\right)\|s\|^{\hn}_{m}\|t\|^{\hn}_{n},\]
where $f(n)=\frac{3}{2}\nu(\Omega_\infty)\log n$, and $\nu(\Omega_\infty)$ is the measure of archimedean absolute values of $S=(K,(\Omega,\mathcal A,\nu),\phi)$. The term $f(n)$ is original from the partial response of the semi-stable conjecture of Bost \cite[Theorem A]{Bost-Chen}, see also \cite[\S 5.6]{ChenIkoma2020}.

In order to obtain a new family of norms which are sub-multiplicative, we introduce the \textit{spectral norm} $\|\ndot\|_{\sp,D}$ of $\|\ndot\|^{\hn}_D$ as \begin{equation}\label{definition of spectral norm}
        \begin{array}{rrcl}
        \|\ndot\|_{\sp,D}:&H^0(X,L^{\otimes D})&\longrightarrow&\mathbb R_{\geqslant 0}\\
        &s&\mapsto&\lim\limits_{n\rightarrow\infty}\left(\|s^n\|_{nD}^{\hn}\right)^{1/n}.
        \end{array}
        \end{equation}
In this case, $\left(E_D,\|\ndot\|_{\sp,D}\right)$ is an adelic vector bundle over $S_0$. By \cite[Proposition 3.2.2 (3)]{ChenMoriwaki2022}, for every $s\in E_m$ and $t\in E_n$, we have
\[\|s\cdot t\|_{\sp,m+n}\leqslant\|s\|_{\sp,m}\|t\|_{\sp,n}.\]

\subsubsection{}\label{normed graded linear series over S_0}
Let $\overline E_{\hn,D}=\left(E_D, \|\ndot\|_{\sp,D}\right)$, then
\begin{equation*}
        \overline E_{\hn,\bullet}=\bigoplus\limits_{D\in\mathbb N}\overline{E}_{\hn,D}
\end{equation*}
is a filtrated graded linear series (see Definition \ref{filtrated graded linear series} for the definition) over $S_0$, where we take $\lambda(\ndot)=-\log\|\ndot\|_{\sp,D}$ in Definition \ref{filtrated graded linear series}.

We denote by $\mathcal F_{\sp}$ the $\mathbb R$-filtration on $\overline{E}_{\hn,D}$ induced by the filtrated structure $\lambda(\ndot)=-\log\|\ndot\|_{\sp,D}$, where we omit the index $D\in\mathbb N$ in the above notation. The existence of such $\mathcal F_{\sp}$ is ensured by the finiteness of the values of $\|\ndot\|_{\sp,D}$ on $\overline E_{\hn,D}$.
\subsubsection{}
Some asymptotic behaviors of $\overline E_{\hn,\bullet}$ are useful for the study of $\overline E_\bullet$. Since $\overline E_{\hn,\bullet}$ is filtrated, Proposition \ref{convergence of measures} and Proposition \ref{asymptotic maximal of lambda} will be applied in this process.
\begin{prop}\label{same volume over S and S_0}
With all the notations in \S \ref{from original series to spectral norm series} and \S \ref{normed graded linear series over S_0}, we have
\[\widehat{\vol}(\E_\bullet)=\widehat{\vol}(\overline E_{\hn,\bullet}).\]
\end{prop}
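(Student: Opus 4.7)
The plan is to apply Proposition \ref{Estimates by R-filtration} to both $\overline{E}_n$ over $S$ and $\overline{E}_{\hn,n}$ over $S_0$, converting both arithmetic volumes into limsups of integrals against the $\mathbb{R}$-filtration measures, and then to reduce the equality to an asymptotic comparison between the Harder--Narasimhan filtration on $\overline{E}_n$ and the spectral-norm filtration on $\overline{E}_{\hn,n}$. Over $S$, the proposition bounds the error $\adeg_+(\overline{E}_n) - \int_0^{+\infty} x\,\nu_{\overline{E}_n}(\mathrm{d}x)$ by $\tfrac{1}{2}\nu(\Omega_\infty)\dim_K(E_n)\log\dim_K(E_n)$; since $\dim_K(E_n) \leq \dim_K H^0(X, L^{\otimes n}) = O(n^{\dim X})$ and $\nu(\Omega_\infty) < +\infty$ by properness of $S$, this error is $o(n^{\dim X + 1})$ and washes out upon dividing by $n^{\dim X + 1}/(\dim X + 1)!$. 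Over $S_0$, the trivial absolute value is non-archimedean so $\nu(\Omega_\infty) = 0$ and there is no error: $\adeg_+(\overline{E}_{\hn,n})$ equals exactly the corresponding integral against the spectral-norm filtration $\mathcal{F}_{\sp}$ on $E_n$ from \S \ref{normed graded linear series over S_0}.

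The inequality $\widehat{\vol}(\overline{E}_{\hn,\bullet}) \geq \widehat{\vol}(\overline{E}_\bullet)$ then follows from the pointwise bound $\|\cdot\|_{\sp,n} \leq \|\cdot\|^{\hn}_n$. This bound is a Fekete-type consequence of the sub-multiplicativity with logarithmic correction $f(n) = \tfrac{3}{2}\nu(\Omega_\infty)\log n$ stated in \cite[Proposition 4.1.3]{ChenMoriwaki2022}: the sequence $u_k = -\log\|s^k\|^{\hn}_{kn}$ is super-additive with $o(k)$ correction, so $\lim u_k/k = \sup_k u_k/k \geq u_1$. It forces $\mathcal{F}^t_{\hn}(E_n) \subseteq \mathcal{F}^t_{\sp}(E_n)$ for every $t \in \mathbb{R}$ and hence $\int_0^{+\infty} x\,\nu_{\overline{E}_{\hn,n}}(\mathrm{d}x) \geq \int_0^{+\infty} x\,\nu_{\overline{E}_n}(\mathrm{d}x)$ at every $n$.

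For the reverse inequality, the plan is to invoke Proposition \ref{convergence of measures} applied to the filtrated approximable graded algebra $\overline{E}_{\hn,\bullet}$ (whose approximability follows from Proposition \ref{contain an ample divisor => approximable} in the settings of interest): it yields weak convergence of the rescaled measures $T_{1/n}(\nu_{\overline{E}_{\hn,n}})$ and, via \eqref{asymptotic lambda+}, convergence of $\lambda_+(E_n, \mathcal{F}_{\sp})/n$ to $\lambda_+^{\asy}(\overline{E}_{\hn,\bullet})$. The remaining step is to show that the rescaled HN measures $T_{1/n}(\nu_{\overline{E}_n})$ share the same weak limit: the defining identity $\lambda^{\sp}(s, E_n) = \sup_k \lambda^{\hn}(s^k, E_{kn})/k$ (Fekete super-additivity) transports each spectral-slope contribution at level $n$ to a comparable HN-slope contribution at level $kn$ for large $k$, while the correction $f(n)/n \to 0$ ensures any sub-limit of $T_{1/n}(\nu_{\overline{E}_n})$ must agree with the spectral limit.

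The main obstacle is this final matching of weak limits: at fixed $n$, the Harder--Narasimhan and spectral filtrations can differ substantially, and it is only the Fekete-type passage to the limit together with the $o(n)$ logarithmic correction that forces them to agree asymptotically. The rest of the bookkeeping---isolating the $O(n^{\dim X}\log n)$ error from archimedean places over $S$, and verifying that $\overline{E}_{\hn,\bullet}$ is filtrated approximable so that Proposition \ref{convergence of measures} applies---is routine given the material set up in \S \ref{Chap. 3}.
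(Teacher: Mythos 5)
The paper's own proof of Proposition~\ref{same volume over S and S_0} is a one-line citation of \cite[Corollary 3.4.4, (4.1.2), Corollary-Definition 4.1.4]{ChenMoriwaki2022}; you are instead trying to re-derive the equality from scratch, and the reconstruction has genuine gaps.

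The central flaw is the claimed pointwise bound $\|\cdot\|_{\sp,n}\leqslant\|\cdot\|^{\hn}_n$, which you justify by saying that super-additivity up to $o(k)$ correction gives $\lim_k u_k/k=\sup_k u_k/k\geqslant u_1$ for $u_k=-\log\|s^k\|^{\hn}_{kn}$. Fekete's lemma gives $\lim=\sup$ only for an \emph{exactly} super-additive sequence; once a correction term is allowed, the conclusion $\lim u_k/k\geqslant u_1$ simply fails (take $u_1=10$, $u_k=k$ for $k\geqslant 2$: this is super-additive up to a bounded, hence $o(k)$, correction, yet $\lim u_k/k=1<u_1$). What the inequality in \cite[Proposition 4.1.3]{ChenMoriwaki2022} actually yields, e.g.\ by dyadic splitting, is
\[
\|s\|_{\sp,n}\leqslant\exp\!\Bigl(\sum_{i\geqslant 0}2^{-i}f(2^i n)\Bigr)\,\|s\|^{\hn}_n
=\exp\bigl(O(\log n)\bigr)\,\|s\|^{\hn}_n,
\]
so $\lambda^{\sp}(s)\geqslant\lambda^{\hn}(s)-O(\log n)$; the error only disappears after the rescaling $T_{1/n}$, and nothing of the form $\mathcal F^t_{\hn}(E_n)\subseteq\mathcal F^t_{\sp}(E_n)$ for every $t$ at a fixed $n$ follows. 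So even the ``easy'' inequality is not established pointwise, only asymptotically after rescaling, which is precisely the content of the cited \cite[Corollary 3.4.4]{ChenMoriwaki2022} and should not be taken for granted.

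The reverse inequality is where you acknowledge ``the main obstacle'' and then do not overcome it. You invoke Proposition~\ref{convergence of measures} for $\overline E_{\hn,\bullet}$, but that proposition requires the filtered algebra to be \emph{approximable}; Proposition~\ref{contain an ample divisor => approximable} supplies this only under the bigness/containing-an-ample-divisor hypotheses of Corollary~\ref{approximable of E_D^lambda}, which are not part of the hypotheses of Proposition~\ref{same volume over S and S_0}. And even granting approximability, your final sentence ``forces them to agree asymptotically'' is asserted, not argued: you would need to show that the rescaled HN measure $T_{1/n}\nu_{(E_n,\mathcal F_{\hn})}$ and the rescaled spectral measure $T_{1/n}\nu_{(E_n,\mathcal F_{\sp})}$ have the same weak limit, which requires a uniform two-sided comparison of the two filtrations up to $o(n)$. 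That comparison is exactly what the cited corollaries in \cite{ChenMoriwaki2022} package; omitting it leaves the proof incomplete in the direction that matters most.

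In short: your first paragraph (converting $\adeg_+$ to measure integrals via Proposition~\ref{Estimates by R-filtration} and discarding the $O(n^{\dim X}\log n)$ error) is fine, but the two directional arguments are respectively wrong (Fekete with correction does not give a one-sided pointwise inequality) and unsubstantiated (the asymptotic matching of the two filtrations is the hard content, not routine bookkeeping). The paper avoids all this by appealing to the results in \cite{ChenMoriwaki2022} where this comparison is actually carried out.
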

\begin{proof}
It is from \cite[Corollary 3.4.4, (4.1.2), Corollary-Definition 4.1.4]{ChenMoriwaki2022} directly.
\end{proof}

By Proposition \ref{convergence of measures}, the sequence $\left(\frac{1}{n}\wmu_{\max}(\overline{E}_n)\right)_{n\geqslant 0}$ converges to a real number, and we denote it by $\wmu_{\max}^{\asy}(\overline{E}_\bullet)$. If $E_n=H^0(X,L^{\otimes n})$ equipped with the supremum norms, it is denoted by $\wmu_{\max}^{\asy}(\overline{L})$ for simplicity.

The following result is an equivalent property of arithmetically bigness.
\begin{prop}\label{criterion of arithmetically big}
Let $\overline L$ be an adelic line bundle on $X$ over a proper adelic curve $S$. If $L$ is a big line bundle on $X$, then $\widehat{\vol}(\overline L)>0$ if and only if $\wmu_{\max}^{\asy}(\overline{L})>0$. In addition, we have
\[\wmu_{\max}^{\asy}(\overline L)=\lambda_{\max}^{\asy}(\overline E_{\hn,\bullet}).\]
\end{prop}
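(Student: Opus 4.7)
The plan is to derive both statements from the asymptotic machinery for filtrated approximable graded algebras over the trivially valued curve $S_0$.

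First I would establish the identity $\wmu_{\max}^{\asy}(\overline L)=\lambda_{\max}^{\asy}(\overline E_{\hn,\bullet})$. The inequality ``$\leq$'' is immediate: for any $D\in\mathbb N^+$ and nonzero $s\in E_D$, the definition of the spectral norm gives
\[
-\log\|s\|_{\sp,D}=\lim_{n\to\infty}\frac{-\log\|s^n\|^{\hn}_{nD}}{n},
\]
and since $s^n\in E_{nD}\smallsetminus\{0\}$ and $-\log\|t\|^{\hn}_{nD}\leq\wmu_{\max}(\overline E_{nD})$ for every nonzero $t\in E_{nD}$ (the HN norm is controlled by the top jump of the HN filtration), one obtains $-\log\|s\|_{\sp,D}\leq D\cdot\wmu_{\max}^{\asy}(\overline L)$; taking the supremum over $s$, dividing by $D$, and letting $D\to\infty$ yields the inequality. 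For the reverse inequality, I would apply a superadditive Fekete-type argument to $a_n:=-\log\|s^n\|^{\hn}_{nD}$, which is superadditive modulo $f(mD)+f(nD)=O(\log(mnD))$ by the quasi-submultiplicativity of the HN norm recalled in \S \ref{from original series to spectral norm series}; the generalized Fekete lemma then produces $-\log\|s\|_{\sp,D}\geq -\log\|s\|^{\hn}_D-O(\log D)$. Applying this to a section $s_D\in E_D$ achieving $-\log\|s_D\|^{\hn}_D=\wmu_{\max}(\overline E_D)$, dividing by $D$ and letting $D\to\infty$ gives $\lambda_{\max}^{\asy}(\overline E_{\hn,\bullet})\geq \wmu_{\max}^{\asy}(\overline L)$.

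Once the identity is in hand, the bigness criterion follows from a chain of equivalences. Proposition \ref{same volume over S and S_0} reduces $\widehat{\vol}(\overline L)>0$ to $\widehat{\vol}(\overline E_{\hn,\bullet})>0$. Since $S_0$ has no archimedean places, the error term in Proposition \ref{Estimates by R-filtration} applied to $\overline E_{\hn,D}$ over $S_0$ vanishes, so that $\adeg_+(\overline E_{\hn,D})$ is proportional to $\lambda_+(\overline E_{\hn,D})$ with proportionality constant $\dim_K(E_D)$. Because $L$ is big, $\dim_K(E_D)\sim \vol(L)\cdot D^{\dim X}/(\dim X)!$; and because $\overline E_{\hn,\bullet}$ is filtrated (\S \ref{normed graded linear series over S_0}) and approximable (the full graded algebra of $L$ contains an ample divisor since $L$ is big, so Proposition \ref{contain an ample divisor => approximable} applies), Proposition \ref{convergence of measures} and \eqref{asymptotic lambda+} ensure $\lambda_+(\overline E_{\hn,D})/D\to \lambda_+^{\asy}(\overline E_{\hn,\bullet})$. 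Combining these, $\widehat{\vol}(\overline E_{\hn,\bullet})=(\dim X+1)\cdot\vol(L)\cdot\lambda_+^{\asy}(\overline E_{\hn,\bullet})$, and in particular $\widehat{\vol}(\overline L)>0$ if and only if $\lambda_+^{\asy}(\overline E_{\hn,\bullet})>0$; Proposition \ref{asymptotic maximal of lambda} and the identity above chain this down to $\wmu_{\max}^{\asy}(\overline L)>0$.

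The main obstacle I anticipate is the Fekete-type comparison $-\log\|s\|_{\sp,D}\geq-\log\|s\|^{\hn}_D-O(\log D)$ in the reverse direction of the identity: one must iterate the quasi-submultiplicativity carefully and invoke a version of Fekete's lemma allowing a logarithmic defect in the superadditivity, and then verify that the accumulated error divided by $D$ tends to zero. Once this comparison is settled, the remainder of the proof is a mechanical assembly of the tools from \S \ref{chap2} and \S \ref{Chap. 3}.
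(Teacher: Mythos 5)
The paper's own proof of this proposition is a one-line citation: the equivalence $\widehat{\vol}(\overline L)>0\iff\wmu_{\max}^{\asy}(\overline L)>0$ is attributed to \cite[Proposition 6.4.18]{ChenMoriwaki2020}, and the identity $\wmu_{\max}^{\asy}(\overline L)=\lambda_{\max}^{\asy}(\overline E_{\hn,\bullet})$ to \cite[Proposition 3.4.1]{ChenMoriwaki2022}. You reconstruct both facts from the paper's internal machinery rather than importing them, which is a genuinely different route. Your ``$\leq$'' direction correctly uses that the top jump of the HN filtration is $\wmu_{\max}$, and your ``$\geq$'' direction via a logarithmic-defect Fekete argument works: from $a_{m+n}\geq a_m+a_n-f(mD)-f(nD)$ with $f(k)=\tfrac32\nu(\Omega_\infty)\log k$, telescoping along powers of two gives $\lim_n a_n/n\geq a_1-3\nu(\Omega_\infty)(\log 2+\log D)$, and after dividing by $D$ and letting $D\to\infty$ the defect dies, yielding $\lambda_{\max}^{\asy}(\overline E_{\hn,\bullet})\geq\wmu_{\max}^{\asy}(\overline L)$. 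Combined with the ``$\leq$'' bound, this recovers the identity. Your chain for the bigness criterion then essentially re-derives Proposition~\ref{computation of volume from asymptotic lambda+} (which the paper places just after the present statement and also leaves to ``elementary calculation''), so you could alternatively quote it directly. The trade-off is clear: the paper's proof is brief because it outsources the heavy lifting to \cite{ChenMoriwaki2020,ChenMoriwaki2022}; your version is self-contained and makes the mechanism visible (quasi-submultiplicativity of the HN norm, spectral-norm smoothing, weak convergence of the rescaled filtration measures), at the cost of length.

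Two details worth tightening. First, in invoking Proposition~\ref{Estimates by R-filtration} over $S_0$, the measure appearing there is induced by the HN filtration of $\overline E_{\hn,D}$ as an adelic bundle over $S_0$, whereas $\lambda_+(\overline E_{\hn,D})$ is computed from the $\mathcal F_{\sp}$ filtration; you are implicitly using that over a trivially valued base, the HN filtration of an ultrametric normed space coincides with the filtration induced by the norm itself (this is part of the content of \cite[\S 3]{ChenMoriwaki2022}, but it should be flagged). Second, when applying Proposition~\ref{convergence of measures} to $\overline E_{\hn,\bullet}$ you need $\sup_n\lambda_{\max}(\overline E_{\hn,n})/n<+\infty$; your ``$\leq$'' inequality supplies this once one knows $\wmu_{\max}^{\asy}(\overline L)<+\infty$, which is a standard but separate fact (quasi-superadditivity of $\wmu_{\max}(\overline E_n)$ plus a crude upper bound) that should be invoked explicitly to avoid the appearance of circularity. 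Neither point is a genuine gap, but both belong in a careful write-up.
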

\begin{proof}
The equivalence of $\widehat{\vol}(\overline L)>0$ and $\wmu_{\max}^{\asy}(\overline{L})>0$ is from \cite[Proposition 6.4.18]{ChenMoriwaki2020} directly. The equality is from \cite[Proposition 3.4.1]{ChenMoriwaki2022}.
\end{proof}
From Proposition \ref{Estimates by R-filtration} and Proposition \ref{same volume over S and S_0}, we have the following result by an elementary calculation.
\begin{prop}\label{computation of volume from asymptotic lambda+}
Let $\overline E_{\bullet}=\bigoplus\limits_{D\geqslant0}\overline E_D$ be a normed graded linear series over $S$. If $\vol(E_\bullet)>0$ following the definition in \S \ref{geometric big graded linear series}, then we have
\[\lambda_+^{\asy}(\overline E_{\hn,\bullet})=\frac{\widehat{\vol}(\overline E_\bullet)}{\left(\dim(X)+1\right)\vol(E_\bullet)},\]
where $\lambda_+^{\asy}(\ndot)$ is defined at \eqref{asymptotic lambda+} with respect to the canonical filtrated structure introduced in \S \ref{normed graded linear series over S_0} over $S_0$.
\end{prop}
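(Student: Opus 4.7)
The plan is to combine Proposition \ref{same volume over S and S_0}, Proposition \ref{Estimates by R-filtration}, and the limit description \eqref{asymptotic lambda+} of $\lambda_+^{\asy}$ into a single asymptotic comparison, by splitting the arithmetic volume as a product of a geometric growth rate and an average positive slope. First, I would use Proposition \ref{same volume over S and S_0} to replace $\widehat{\vol}(\overline E_\bullet)$ by $\widehat{\vol}(\overline E_{\hn,\bullet})$, reducing everything to a computation in the trivially valued picture where the spectral $\mathbb R$-filtration on $\overline E_{\hn,\bullet}$ from \S \ref{normed graded linear series over S_0} is directly at our disposal.

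By the definition of arithmetic volume together with Proposition \ref{Estimates by R-filtration} applied to each $\overline E_{\hn,n}$, the positive Arakelov degree $\adeg_+(\overline E_{\hn,n})$ coincides, up to an error of order $\dim_K(E_n)\log\dim_K(E_n)=O(n^{\dim(X)}\log n)$, with the moment $\dim_K(E_n)\lambda_+(E_n)$ of the Harder--Narasimhan measure (with the normalisation of \S \ref{measure from R-filtration}). Since this error is negligible once divided by $n^{\dim(X)+1}/(\dim(X)+1)!$, the main task reduces to the asymptotics of $\dim_K(E_n)\lambda_+(E_n)/n^{\dim(X)+1}$. The key algebraic step is the rearrangement
\[
\frac{\dim_K(E_n)\lambda_+(E_n)}{n^{\dim(X)+1}/(\dim(X)+1)!}
=\bigl(\dim(X)+1\bigr)\cdot\frac{\dim_K(E_n)}{n^{\dim(X)}/\dim(X)!}\cdot\frac{\lambda_+(E_n)}{n},
\]
which cleanly separates the geometric and arithmetic factors. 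The factor $\lambda_+(E_n)/n$ tends to $\lambda_+^{\asy}(\overline E_{\hn,\bullet})$ by \eqref{asymptotic lambda+}, applied to the filtrated graded linear series $\overline E_{\hn,\bullet}$. Meanwhile, the hypothesis $\vol(E_\bullet)>0$ together with the classical Fujita-type existence of the volume as a genuine limit (not merely a $\limsup$) for big graded linear series forces $\dim_K(E_n)/(n^{\dim(X)}/\dim(X)!)$ to converge to $\vol(E_\bullet)$.

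Passing to the limit on both sides yields
\[
\widehat{\vol}(\overline E_\bullet)=\bigl(\dim(X)+1\bigr)\,\vol(E_\bullet)\,\lambda_+^{\asy}(\overline E_{\hn,\bullet}),
\]
from which the stated identity follows upon dividing by $(\dim(X)+1)\vol(E_\bullet)$. The subtle point I expect to be the main obstacle is precisely this last convergence step: the $\limsup$ in the definition of $\widehat{\vol}$ must genuinely be upgraded to a limit, since a product of two sequences gives only an inequality of $\limsup$s in general. This is what ultimately requires a hypothesis compatible with Proposition \ref{contain an ample divisor => approximable}---either that $E_\bullet$ contains an ample divisor or that it is the full section ring of a big line bundle---so that both sequences converge simultaneously and the product of limits does coincide with the limit of products. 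Once that is secured, the identity is immediate.
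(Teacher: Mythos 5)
Your proposal is essentially the route the paper intends: its ``elementary calculation'' is exactly the combination of Proposition~\ref{same volume over S and S_0}, Proposition~\ref{Estimates by R-filtration}, and the convergence~\eqref{asymptotic lambda+}, followed by the factorisation of $\adeg_+(\overline E_n)/(n^{d+1}/(d+1)!)$ into the geometric and slope factors that you write down. Two small remarks. First, once you invoke Proposition~\ref{same volume over S and S_0} and work over $S_0$, the error term in Proposition~\ref{Estimates by R-filtration} vanishes identically (the trivial valuation is non\nobreakdash-archimedean, so $\nu(\Omega_\infty)=0$), giving $\adeg_+(\overline E_{\hn,n})=\dim_K(E_n)\,\lambda_+(E_n)$ exactly; the $O(n^{\dim X}\log n)$ estimate you quote is only needed if you stay over $S$ and deal with the discrepancy between the Harder--Narasimhan filtration over $S$ and the spectral filtration over $S_0$ directly. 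Second, and more importantly, the ``main obstacle'' you flag at the end is not where the real content lies. You do \emph{not} need to promote the $\limsup$ defining $\vol(E_\bullet)$ to a genuine limit: since $b_n:=\lambda_+(E_n)/n$ converges to $\lambda_+^{\asy}(\overline E_{\hn,\bullet})$ by \eqref{asymptotic lambda+}, and both factors are non-negative with $\limsup a_n=\vol(E_\bullet)<\infty$, one has the elementary identity $\limsup_n(a_nb_n)=(\limsup_n a_n)\cdot(\lim_n b_n)$, which already yields $\widehat{\vol}(\overline E_\bullet)=(\dim X+1)\,\vol(E_\bullet)\,\lambda_+^{\asy}(\overline E_{\hn,\bullet})$. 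What genuinely is needed is that $b_n$ converges at all, i.e.\ that $\overline E_{\hn,\bullet}$ is a filtrated \emph{approximable} graded algebra so that Proposition~\ref{convergence of measures} applies; so your instinct that some condition in the spirit of Proposition~\ref{contain an ample divisor => approximable} is being used is correct, you have simply attached it to the geometric factor instead of the slope factor.
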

\subsubsection{}
Let $\overline L=(L,\varphi)$ be an adelic line bundle on $X$ over an arbitrary proper curve $S$. We consider $\wmu_{\min}(H^0(X,L^{\otimes n}),n\varphi)$, and we define
\[\wmu_{\min}^{\asy}(\overline L)=\liminf_{n\to\infty}\frac{\wmu_{\min}(H^0(X,L^{\otimes n}),n\varphi)}{n},\]
where the adelic vector bundle $(H^0(X,L^{\otimes n}),n\varphi)$ over $S$ is defined in \S \ref{adelic line bundle on variety}.

We recall some results on the criteria of arithmetic positivities via $\wmu_{\max}^{\asy}(\ndot)$ and $\wmu_{\min}^{\asy}(\ndot)$ in \cite[\S 9]{ChenMoriwaki2022} below, where we always suppose that $K$ is perfect.
\begin{prop}[Proposition 9.1.2, \cite{ChenMoriwaki2022}]\label{properties of ample}
Let $\overline L$ be a relatively ample adelic line bundle. Then the following properties are equivalent:
\begin{enumerate}
\item[(1)] $\overline L$ is ample;
\item[(2)] $\wmu_{\min}^{\asy}(\overline L)>0$;
\item[(3)] there exists an $\epsilon>0$, such that for any closed sub-scheme $Y$ of $X$, we have $\wmu_{\max}^{\asy}(\overline L|_Y)>\epsilon$.
\end{enumerate}
See \S \ref{definition of ample} for the definitions of relatively ample and ample adelic line bundles.
\end{prop}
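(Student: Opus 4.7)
The plan is to prove the equivalences in the cyclic order (1) $\Rightarrow$ (2) $\Rightarrow$ (3) $\Rightarrow$ (1), using the arithmetic Hilbert--Samuel formula of Chen--Moriwaki as the main bridge between intersection numbers and asymptotic slopes, together with the relative ampleness as a standing hypothesis (which guarantees that the sup norms we work with are well-behaved and that intersection numbers are defined).

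For (1) $\Rightarrow$ (2), the strategy is to pick an auxiliary ample adelic line bundle $\overline A$ on $X$ and a rational $\delta>0$ small enough that $\overline L^{\otimes n}\otimes \overline A^{\otimes -\delta n}$ remains \emph{uniformly} arithmetically positive on every integral closed subscheme $Y$ (here the defining inequality of ampleness of $\overline L$, together with the arithmetic intersection calculus of \cite{ChenMoriwaki2021}, should give room for such a $\delta$). One then passes to the Harder--Narasimhan filtration of $H^0(X,L^{\otimes n})$ and uses the standard fact that $\wmu_{\min}$ of a twist by an ample bundle controls $\wmu_{\min}$ of the original, extracting $\wmu_{\min}^{\asy}(\overline L)\geqslant \delta\,\wmu_{\min}^{\asy}(\overline A)>0$ in the limit $n\to\infty$.

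For (2) $\Rightarrow$ (3), the argument is direct. Since $L$ is ample (implied by relative ampleness), for each integral closed $Y\subseteq X$ the restriction map $H^0(X,L^{\otimes n})\twoheadrightarrow H^0(Y,L|_Y^{\otimes n})$ is surjective for $n$ large enough. Endowing the quotient with quotient norms and comparing these with the intrinsic sup norms on $Y$ (a step that costs only $o(n)$ slope, by the theory of adelic norms on $H^0(Y,L|_Y^{\otimes n})$ in \cite{ChenMoriwaki2020}), the minimal slope of a quotient dominates that of the source, hence
\[\wmu_{\max}^{\asy}(\overline L|_Y)\;\geqslant\;\wmu_{\min}^{\asy}(\overline L|_Y)\;\geqslant\;\wmu_{\min}^{\asy}(\overline L)>0,\]
and the constant $\wmu_{\min}^{\asy}(\overline L)$ serves as a uniform $\epsilon$ independent of $Y$.

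The main obstacle is (3) $\Rightarrow$ (1), which is an arithmetic Nakai--Moishezon step. Given the uniform positivity $\wmu_{\max}^{\asy}(\overline L|_Y)>\epsilon$, Proposition \ref{criterion of arithmetically big} yields arithmetic bigness of each $\overline L|_Y$. The plan is then to quantify this bigness: from the filtrated structure of $\overline E_{\hn,\bullet}$ on $Y$ and Proposition \ref{computation of volume from asymptotic lambda+}, one has
\[\widehat{\vol}(\overline L|_Y)\;=\;(\dim(Y)+1)\,\vol(L|_Y)\,\lambda_+^{\asy}(\overline E_{\hn,\bullet}),\]
so one needs a quantitative lower bound on $\lambda_+^{\asy}$ in terms of $\lambda_{\max}^{\asy}$. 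Proposition \ref{asymptotic maximal of lambda} is only qualitative, so the key additional ingredient is a control of the shape of the limit measure (for instance via concavity of the Harder--Narasimhan polygon combined with an upper bound on $\wmu_{\max}^{\asy}(\overline L|_Y)$ coming from relative ampleness), yielding $\lambda_+^{\asy}\geqslant c(\dim Y)\,\epsilon^{\dim Y+1}$ for some explicit constant. Substituting into the arithmetic Hilbert--Samuel identity $(\overline L|_Y^{\dim Y+1})=\widehat{\vol}(\overline L|_Y)$ from \cite{ChenMoriwaki2021}, and using $\deg_L(Y)=\vol(L|_Y)/\dim(Y)!$, produces the required uniform inequality defining ampleness. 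The perfectness of $K$ is used here to apply the positivity criteria of \cite[\S 9]{ChenMoriwaki2022} to all the restrictions $\overline L|_Y$, and a Noetherian induction on $\dim(Y)$ absorbs the constants $c(\dim Y)$ into a single $\epsilon'>0$.
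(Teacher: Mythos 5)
The paper does not actually prove this proposition: it is cited verbatim from \cite[Proposition 9.1.2]{ChenMoriwaki2022} and used as a black box. So there is no ``paper's own proof'' to compare against; your task here would simply have been to recognize that this is a recalled result from the reference and not a statement proved in the present article.

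That said, a few remarks on your sketch. The implication (2) $\Rightarrow$ (3) is essentially sound: $\wmu_{\min}^{\asy}(\overline L|_Y)\geqslant\wmu_{\min}^{\asy}(\overline L)$ does follow from the surjectivity of restriction in high degree and the fact that minimal slopes increase under passage to quotients, modulo the norm comparison you indicate. But the hard step (3) $\Rightarrow$ (1) has a circularity problem as written. You invoke an arithmetic Hilbert--Samuel identity $(\overline L|_Y^{\dim Y+1})=\widehat{\vol}(\overline L|_Y)$ to convert a volume lower bound into the intersection-number inequality defining ampleness. However, in the Chen--Moriwaki framework this identity is established under a nefness or ampleness hypothesis on the adelic metric --- precisely the arithmetic positivity you are in the middle of proving. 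A relatively ample adelic line bundle need not satisfy Hilbert--Samuel as an equality; one only has an inequality in one direction a priori. The actual argument in \cite{ChenMoriwaki2022} threads this needle via a continuity/perturbation argument and a careful Noetherian induction in which the Hilbert--Samuel formula is applied only to suitably twisted, already-nef auxiliary bundles. Your ``quantitative lower bound on $\lambda_+^{\asy}$ in terms of $\lambda_{\max}^{\asy}$'' of the form $c(\dim Y)\,\epsilon^{\dim Y+1}$ is also not justified by anything in this paper; Proposition \ref{asymptotic maximal of lambda} is, as you note, purely qualitative, and the concavity of the limit measure by itself does not give a bound independent of $\overline L|_Y$.
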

\begin{prop}[Proposition 9.1.7, \cite{ChenMoriwaki2022}]\label{properties of nef}
Let $\overline L$ be an adelic line bundle. Then the following properties are equivalent:
\begin{enumerate}
\item[(1)] $\overline L$ is nef;
\item[(2)] $\overline L$ is relatively nef and $\wmu_{\min}^{\asy}(\overline L)\geqslant 0$.
\end{enumerate}
See \S \ref{definition of nef} for the definitions of relatively nef and nef adelic line bundles.
\end{prop}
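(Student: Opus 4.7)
The plan is to prove the two implications separately, in each case reducing arithmetic ampleness to the slope characterization recorded in Proposition~\ref{properties of ample}, and then exploiting the sub-/super-additive behaviour of $\wmu_{\min}^{\asy}(\ndot)$ under tensor products of adelic line bundles.

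For $(1)\Rightarrow(2)$, suppose $\overline L$ is nef, so that there exist an arithmetically ample $\overline A$ and $M\in\mathbb N$ with $\overline L^{\otimes m}\otimes\overline A$ arithmetically ample for every $m\geqslant M$. Since arithmetic ampleness implies relative ampleness by definition, the same $\overline A$ and $M$ already witness that $\overline L$ is relatively nef. Applying Proposition~\ref{properties of ample} to $\overline L^{\otimes m}\otimes\overline A$ yields $\wmu_{\min}^{\asy}(\overline L^{\otimes m}\otimes\overline A)>0$ for $m\geqslant M$, and combining this with the sub-additive inequality
\[
\wmu_{\min}^{\asy}\bigl(\overline L^{\otimes m}\otimes\overline A\bigr) \;\leqslant\; m\,\wmu_{\min}^{\asy}(\overline L) + \wmu_{\max}^{\asy}(\overline A)
\]
and dividing by $m$ before letting $m\to\infty$ produces $\wmu_{\min}^{\asy}(\overline L)\geqslant 0$.

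For $(2)\Rightarrow(1)$, assume $\overline L$ is relatively nef with $\wmu_{\min}^{\asy}(\overline L)\geqslant 0$, and fix an arithmetically ample adelic line bundle $\overline B$. I would first argue that $\overline L^{\otimes m}\otimes\overline B$ is relatively ample for all sufficiently large $m$: if $\overline A_{0}$ is the relatively ample witness of relative nefness, then for large $m$ one decomposes $\overline L^{\otimes m}\otimes\overline B$ as a tensor product whose factors include $\overline L^{\otimes m}\otimes\overline A_{0}$ and a relatively ample correction, and relative ampleness is preserved under tensor products with relatively ample bundles. To upgrade this to arithmetic ampleness via Proposition~\ref{properties of ample}, it remains to verify $\wmu_{\min}^{\asy}(\overline L^{\otimes m}\otimes\overline B)>0$. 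The reverse, super-additive inequality
\[
\wmu_{\min}^{\asy}\bigl(\overline L^{\otimes m}\otimes\overline B\bigr) \;\geqslant\; m\,\wmu_{\min}^{\asy}(\overline L) + \wmu_{\min}^{\asy}(\overline B) \;\geqslant\; \wmu_{\min}^{\asy}(\overline B) \;>\; 0,
\]
in which the last step uses Proposition~\ref{properties of ample} applied to the arithmetically ample $\overline B$, then closes the argument.

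The main obstacle in both directions is justifying the two tensor inequalities for $\wmu_{\min}^{\asy}(\ndot)$. Over a general adelic curve the Harder--Narasimhan filtration is not multiplicative in the strong classical sense, so these bounds cannot be extracted from a tensor stability result directly. The plan is to route the argument through the spectral-norm graded algebra $\overline E_{\hn,\bullet}$ of \S\ref{normed graded linear series over S_0}, which is filtrated in the sense of Definition~\ref{filtrated graded linear series}; applying Proposition~\ref{convergence of measures} to the tensor-product graded algebra then yields the additivity of the relevant $\lambda$-minima in the limit, and Proposition~\ref{same volume over S and S_0} together with its minimum-slope counterpart transports these estimates back to the desired bounds on $\wmu_{\min}^{\asy}$. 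The standing assumptions on $(\mathcal A,K)$ and the perfectness of $K$ are indispensable here, the latter because the slope criterion Proposition~\ref{properties of ample} hinges on restriction to arbitrary closed sub-schemes, which is only well behaved after passage to geometrically reduced base changes.
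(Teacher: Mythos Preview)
The paper does not supply its own proof of this proposition: it is stated with a direct citation to \cite[Proposition 9.1.7]{ChenMoriwaki2022} and used thereafter as a black box (notably in Step~III of Theorem~\ref{arithmetic Fujita approximation of nef}). There is therefore no in-paper argument against which to compare your sketch.

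On the substance of your outline: the direction $(2)\Rightarrow(1)$ is essentially the standard one, and the super-additive bound $\wmu_{\min}^{\asy}(\overline L^{\otimes m}\otimes\overline B)\geqslant m\,\wmu_{\min}^{\asy}(\overline L)+\wmu_{\min}^{\asy}(\overline B)$ you use there is indeed available in \cite{ChenMoriwaki2022}. The gap is in $(1)\Rightarrow(2)$. The inequality
\[
\wmu_{\min}^{\asy}\bigl(\overline L^{\otimes m}\otimes\overline A\bigr)\;\leqslant\; m\,\wmu_{\min}^{\asy}(\overline L)+\wmu_{\max}^{\asy}(\overline A)
\]
is not among the standard tensor estimates, and the route you propose through the spectral-norm algebra $\overline E_{\hn,\bullet}$ and Proposition~\ref{convergence of measures} cannot deliver it: the filtrated inequality $\lambda(st)\geqslant\lambda(s)+\lambda(t)$ controls $\lambda_{\max}^{\asy}$ and the positive-part integral $\lambda_{+}^{\asy}$ (this is precisely what Propositions~\ref{convergence of measures}, \ref{asymptotic maximal of lambda} and \ref{criterion of arithmetically big} record), but it gives no grip on the \emph{bottom} of the Harder--Narasimhan polygon. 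Nor does the paper provide any ``minimum-slope counterpart'' of Proposition~\ref{same volume over S and S_0} that would let you transport such an estimate back from $S_0$ to $S$. The dual trick of writing $\overline L^{\otimes m}=(\overline L^{\otimes m}\otimes\overline A)\otimes\overline A^{\vee}$ and applying super-additivity fails as well, since $\wmu_{\min}^{\asy}(\overline A^{\vee})=-\infty$ for ample $\overline A$. If you want a self-contained argument, you will need a genuinely different input for this implication---for instance the height-theoretic lower bound for $\wmu_{\min}^{\asy}$ developed in \cite[\S6--\S9]{ChenMoriwaki2022}---rather than a tensor-slope inequality of the shape you wrote.
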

\section{Fujita approximation of graded linear series}\label{Chap4}
In this section, we will always denote $E_D=H^0(X,L^{\otimes D})$ for simplicity, and all the constructions will be based on this notation.

\subsection{Approximatible graded linear series}
Let $S=(K,(\Omega,\mathcal A,\nu),\phi)$ be a proper adelic curve, where either $\mathcal A$ is a discrete $\sigma$-algebra or $K$ has a countable which is dense in $K_w$ for all $w\in\Omega$. Let $S_0=(K,\{0\},|\ndot|_0)$ be the adelic curve of trivially valued field defined in Example \ref{example: trivially valued field}. In this part, we will consider some approximated properties of normed graded linear series over $S_0$ constructed from that over $S$.
\subsubsection{}
Let $\lambda\in\mathbb R_{\geqslant0}$, $\overline E_{\hn, 0}^{\lambda}=K$, and
\begin{equation}\label{definition of E_{hn,D}}
\overline E_{\hn, D}^{\lambda}=\Vect_K\left\{s\in H^0(X,L^{\otimes D})\mid \|s\|_{\sp,D}\leqslant\exp(-D\lambda)\right\}
\end{equation}
for $D\geqslant1$. So we have
\begin{equation*}
\overline E_{\hn,D}^\lambda=\mathcal F_{\sp}^{D\lambda}\overline E_{\hn,D},
\end{equation*}
where the $\mathbb R$-filtration $\mathcal F_{\sp}$ is defined in \S \ref{convergence of approximable algebras}, and $\overline E_{\hn, D}$ follows the definition in \S \ref{normed graded linear series over S_0} by taking $E_D=H^0(X,L^{\otimes D})$. In addition, we define
\begin{equation}\label{graded linear series modified by lambda}
\overline E^{\lambda}_{\hn,\bullet}=\bigoplus_{D\geqslant0}\overline E_{\hn,D}^\lambda,
\end{equation}
which is a graded linear series over $S_0$.

By the properties of $\overline E_{\hn,\bullet}$ in \S \ref{normed graded linear series over S_0}, we have the following proposition.
\begin{prop}\label{filtrated E_hn lambda}
The graded linear series \eqref{graded linear series modified by lambda} is a filtrated graded linear series over $S_0$.
\end{prop}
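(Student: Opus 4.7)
The plan is to reduce everything to the sub-multiplicativity of the spectral norm established in \S\ref{from original series to spectral norm series}. There are really two things to check: (i) that $\overline E^{\lambda}_{\hn,\bullet}$ is genuinely a graded sub-$K$-algebra of $\overline E_{\hn,\bullet}$, and (ii) that the $\mathbb R$-filtration on each $\overline E_{\hn,D}^{\lambda}$ induced from $\mathcal F_{\sp}$ (equivalently, from the function $s\mapsto -\log\|s\|_{\sp,D}$) satisfies the super-multiplicativity required by Definition \ref{filtrated graded linear series}.

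For (i), I would pick homogeneous elements $s\in\overline E_{\hn,m}^{\lambda}$ and $t\in\overline E_{\hn,n}^{\lambda}$, so by the very definition \eqref{definition of E_{hn,D}} we have
\[
\|s\|_{\sp,m}\leqslant\exp(-m\lambda),\qquad \|t\|_{\sp,n}\leqslant\exp(-n\lambda).
\]
Using the sub-multiplicativity $\|s\cdot t\|_{\sp,m+n}\leqslant\|s\|_{\sp,m}\|t\|_{\sp,n}$ recalled in \S\ref{from original series to spectral norm series}, I immediately obtain $\|s\cdot t\|_{\sp,m+n}\leqslant\exp(-(m+n)\lambda)$, whence $s\cdot t\in\overline E_{\hn,m+n}^{\lambda}$. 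Since over the trivially valued field the spectral norms are non-archimedean and take only finitely many values, the sublevel sets $\{\|\cdot\|_{\sp,D}\leqslant\exp(-D\lambda)\}$ are already $K$-linear subspaces, so no additional work is needed to see that the spaces $\overline E_{\hn,D}^{\lambda}$ are vector subspaces.

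For (ii), the function attached to $\mathcal F_{\sp}$ on $\overline E_{\hn,D}$ via \eqref{norm induced by filtration} is $\lambda(s)=-\log\|s\|_{\sp,D}$; restricting to the subspace $\overline E_{\hn,D}^{\lambda}=\mathcal F_{\sp}^{D\lambda}\overline E_{\hn,D}$ gives the same formula. Hence the inequality $\|s\cdot t\|_{\sp,m+n}\leqslant\|s\|_{\sp,m}\|t\|_{\sp,n}$ translates directly into $\lambda(s\cdot t)\geqslant \lambda(s)+\lambda(t)$ for homogeneous $s,t$, which is exactly the condition of Definition \ref{filtrated graded linear series}.

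I do not expect any serious obstacle here: the argument is essentially a bookkeeping verification that the filtrated structure of $\overline E_{\hn,\bullet}$ survives passage to the graded sub-algebra cut out by the bound $\|\cdot\|_{\sp,D}\leqslant\exp(-D\lambda)$. The only mild subtlety is to notice that the assumption $\overline E_{\hn,0}^{\lambda}=K$ is consistent (since $\|\cdot\|_{\sp,0}$ is the trivial absolute value and $\exp(-0\cdot\lambda)=1$), so that $\overline E^{\lambda}_{\hn,\bullet}$ really is a graded algebra in the sense of \S\ref{appromable graded algebra}. After that, the conclusion is forced by the sub-multiplicativity proved in \cite[Proposition 3.2.2(3)]{ChenMoriwaki2022}.
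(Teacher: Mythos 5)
Your proof is correct and matches the paper's (implicit) argument: the paper states Proposition \ref{filtrated E_hn lambda} without proof, remarking only that it follows from the properties of $\overline E_{\hn,\bullet}$ established in \S\ref{normed graded linear series over S_0}, which is precisely the sub-multiplicativity $\|s\cdot t\|_{\sp,m+n}\leqslant\|s\|_{\sp,m}\|t\|_{\sp,n}$ you invoke. Your spelling-out of both the closure-under-multiplication check (using that the ultrametric sublevel sets are already subspaces) and the super-multiplicativity of the induced $\lambda(\ndot)=-\log\|\ndot\|_{\sp,D}$ is exactly the bookkeeping the paper leaves tacit.
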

\subsubsection{}
Next, we prove that $\overline E_{\hn,D}^\lambda$ is approximable when $\lambda$ is of some particular values. For this target, we will prove it contains an ample divisor, and see Definition \ref{contain an ample divisor}.
\begin{prop}\label{contain an ample divisor}
Let $S=(K,(\Omega,\mathcal A,\nu),\phi)$ be a proper adelic curve, where either $\mathcal A$ is a discrete $\sigma$-algebra or $K$ has a countable sub-field which is dense in $K_w$ for all $w\in\Omega$. Let $\pi:X\rightarrow\spec K$ be a projective morphism, and $\overline L$ be an adelic line bundle over $S$. With all the notations and assumption same as in \eqref{graded linear series modified by lambda}. If $\overline L$ is big, then for all $\lambda\in\left(-\infty,\wmu^{\asy}_{\max}(\overline L)\right)$, $\overline E^{\lambda}_{\hn,\bullet}$ contains an ample divisor.
\end{prop}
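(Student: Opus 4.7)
My plan is to construct data $(p,A,M,s)$ witnessing the ``contains an ample divisor'' property by marrying the arithmetic input $\lambda<\wmu_{\max}^{\asy}(\overline L)$ (which supplies sections of small spectral norm) with the geometric Fujita approximation on a birational model $X'\to X$ obtained from a base-locus resolution; submultiplicativity of the spectral norm on the graded algebra $\overline E_{\hn,\bullet}$ then delivers the required factorization.

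First, fix $\mu'$ with $\lambda<\mu'<\wmu_{\max}^{\asy}(\overline L)$. By Proposition \ref{criterion of arithmetically big}, $\wmu_{\max}^{\asy}(\overline L)=\lambda_{\max}^{\asy}(\overline E_{\hn,\bullet})$, and in view of Propositions \ref{convergence of measures} and \ref{asymptotic maximal of lambda}, the limit measure $\nu_{\overline E_{\hn,\bullet}}$ puts strictly positive mass above $\mu'$. Hence for all sufficiently large $p\in\mathbb N^+$, the subspace
\[V_p:=\overline E_{\hn,p}^{\mu'}=\mathcal F_{\sp}^{p\mu'}\overline E_{\hn,p}\]
has dimension growing like a positive multiple of $p^{\dim X}$. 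Let $v:X'\to X$ be a birational modification resolving the base scheme of $V_p$, let $B$ be the base divisor of $v^*V_p$, and form the decomposition
\[v^*L^{\otimes p}=A\otimes M,\quad A=v^*L^{\otimes p}(-B),\quad M=\O_{X'}(B),\]
with $A$ basepoint-free and $s:=s_B\in H^0(X',M)$ the tautological section associated to $B$. After replacing $p$ by a suitable positive multiple (absorbing a fixed ample factor to promote $A$ from basepoint-free to genuinely ample, and simultaneously achieving surjectivity of $\sym^n H^0(X',A)\twoheadrightarrow H^0(X',A^{\otimes n})$ for all $n\geqslant 1$ via Serre vanishing), we further arrange that $H^0(X',A)\cdot s=V_p$ inside $H^0(X,L^{\otimes p})$.

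Granted these normalizations, the image of $H^0(X',A^{\otimes n})$ under multiplication by $s^n$ coincides with $\sym^n(V_p)\subseteq H^0(X,L^{\otimes np})$. By submultiplicativity of the spectral norm on $\overline E_{\hn,\bullet}$ (recalled in \S \ref{from original series to spectral norm series}), any $n$-fold product of elements of $V_p$ has spectral norm bounded by $\exp(-np\mu')$; since $\overline E_{\hn,np}^{\mu'}$ is a $K$-linear subspace, we conclude that $\sym^n(V_p)\subseteq \overline E_{\hn,np}^{\mu'}\subseteq\overline E_{\hn,np}^{\lambda}$, the last inclusion following from $\mu'>\lambda$. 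This is precisely the factorization condition required for $\overline E_{\hn,\bullet}^{\lambda}$ to contain an ample divisor.

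The main obstacle is establishing the identification $H^0(X',A)\cdot s=V_p$ together with the ampleness of $A$. A priori the base-locus resolution only gives the inclusion $V_p\subseteq H^0(X',A)\cdot s$, and closing the gap requires enlarging $p$ and applying a Lazarsfeld--Mustata saturation technique (so that $V_p$ captures all sections of $v^*L^{\otimes p}$ vanishing along $B$) together with Serre vanishing to handle both ampleness and projective normality simultaneously. It is here that one absorbs a small loss in the slope from $\mu'$, which is permissible because we chose $\mu'>\lambda$ with room to spare at the outset.
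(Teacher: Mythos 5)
Your approach diverges substantially from the paper's, and there are two genuine gaps that prevent it from going through.

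\textbf{Mismatch with the definition.} Definition~\ref{contain an ample divisor} requires the ample line bundle $A$ and the effective line bundle $M$ to live on $X$ itself, with a decomposition $L^{\otimes p}=A\otimes M$ \emph{on $X$}. Your construction produces $A$ and $M$ on a birational model $X'\to X$, via $v^*L^{\otimes p}=A\otimes M$. This does not satisfy the definition: $A$ does not descend along $v$, and an ample bundle on $X'$ cannot in general be replaced by an ample bundle on $X$ fitting into a decomposition of $L^{\otimes p}$. The paper deliberately avoids any birational modification in this proposition precisely because the notion ``contains an ample divisor'' is a statement about line bundles on $X$ and not on a model. (Birational modifications only enter in \S\ref{Chap5}, at the level of adelic line bundles, not here.)

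\textbf{The claimed equality $H^0(X',A)\cdot s=V_p$ is not justified and goes in the wrong direction.} By construction of the base-locus resolution, you get $V_p\subseteq H^0(X',A)\cdot s$: $H^0(X',A)\cdot s$ is the full space of sections of $L^{\otimes p}$ vanishing along $B$, while $V_p=\mathcal F_{\sp}^{p\mu'}\overline E_{\hn,p}$ is defined by an analytic (spectral-norm) condition that has nothing to do with vanishing along $B$. Sections in $H^0(X',A)\cdot s\smallsetminus V_p$ carry no spectral-norm bound, so you cannot conclude $H^0(X',A^{\otimes n})\cdot s^n\subseteq \overline E^{\lambda}_{\hn,np}$. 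You acknowledge that ``closing the gap requires \ldots a Lazarsfeld--Mustata saturation technique,'' but that is precisely the unproved hard step, and I do not see how it can succeed: saturation arguments in \cite{Lazarsfeld_Mustata08} apply when the subspaces $W_p$ are cut out by geometric conditions (order of vanishing along divisors), whereas $V_p$ is cut out by a Harder--Narasimhan slope condition that is invisible to the base scheme of the linear system.

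\textbf{What the paper does instead.} The paper's proof applies the geometric Kodaira decomposition $L^{\otimes p}=M\otimes N$ directly on $X$ (with $M$ ample, $N$ effective), chooses $p$ large enough that $\bigoplus_D H^0(X,M^{\otimes D})$ is generated in degree one by $t_1,\ldots,t_l$, and then \emph{twists by a high power $s^d$ of an auxiliary section $s\in H^0(X,L^{\otimes k})$ of small spectral norm} (whose existence follows from $\lambda_{\max}^{\asy}(\overline E_{\hn,\bullet})=\wmu_{\max}^{\asy}(\overline L)>\lambda$). Submultiplicativity of $\|\ndot\|_{\sp}$ then forces every product $(s^d t_1)^{a_1}\cdots(s^d t_l)^{a_l}$ to have spectral norm $\leqslant\exp\bigl(-(kd+p)D(\lambda_0-\epsilon)\bigr)$, so for $d$ large enough the whole image of $\bigoplus_D H^0(X,M^{\otimes D})$ under multiplication by $s^{dD}$ lands inside $\bigoplus_D\overline E^{\lambda}_{\hn,(kd+p)D}$. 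The pair $(A,M)$ required by Definition~\ref{contain an ample divisor} is then $\bigl(M,\,N\otimes L^{\otimes kd}\bigr)$, both on $X$, with the effective section being $s^d$ times the section of $N$. This twist-by-a-small-norm-section step is the key idea that your proposal is missing; it lets one push the ample piece into the correct slope window without appealing to any equality between $H^0(X',A)\cdot s$ and a spectral-norm-defined subspace.
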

\begin{proof}
By Proposition \ref{criterion of arithmetically big} and Proposition \ref{same volume over S and S_0}, from the fact that $\overline L$ is big, we have $\overline E_{\hn,\bullet}$ is big.

In addition, by the choice of $\lambda$, we have $\lambda^{\asy}_{\max}(\overline E_{\hn,\bullet}^\lambda)>0$ by definition directly. Then for a sufficiently large $n\in\mathbb N$, $L^{\otimes n}$ has a global section $s\in H^0(X,L^{\otimes n})$ satisfying $\|s\|_{\sp,n}\leqslant\exp(-n\lambda)$, which proves $\overline E_{\hn,n}^{\lambda}\neq0$ for $n\in\mathbb N$ sufficiently large.

By Kodaira decomposition \cite[Corollary 2.2.7]{LazarsfeldI}, there exists a $p_0\in\mathbb N$, such that for every $p\in\mathbb N\cap[p_0,+\infty)$, there always exist an ample line bundle $M$ and an effective line bundle $N$, which satisfies $L^{\otimes p}=M\otimes N$.

We may choose the above $p$ large enough, such that $\bigoplus\limits_{D\geqslant0}H^0(X,M^{\otimes D})$ is generated by global sections $t_1,\ldots,t_l\in H^0(X,M)$, which is also considered as sections in $H^0(X,L^{\otimes d})$.

We denote by $\lambda_0=\wmu_{\max}^{\asy}(\overline L)$ for simplicity. Then for all $\epsilon>0$, there exist a $k\in\mathbb N$ and an $s\in H^0(X,L^{\otimes k})$, such that
\[\|s\|_{\sp,pk}<\exp\left(-(\lambda_0-\epsilon/2)k\right).\]
Then there exists a $d\in\mathbb N$, such that
\[\|t_is^d\|_{\sp,kd+p}<\exp\left(-(\lambda_0-\epsilon)(kd+p)\right),\]
where $i=1,\ldots,l$.

Let $G_{\bullet}^{\epsilon}$ be the graded linear series generated by $s^Dt_1,\ldots,s^Dt_l$, which is a sub-graded algebra of $E_{\hn,\bullet}$. Then for $D\in\mathbb N$, we have
\[G_{D}^{\epsilon}=\Vect_K\left\{(s^dt_1)^{a_1}\cdots(s^dt_l)^{a_l}\mid a_1+\cdots+a_l=D\right\}=s^{dD}H^0(X,M^{\otimes D}),\]
which is a sub-$K$-algebra of $E_{\hn,(kd+p)D}$. Then for all $\lambda\in(-\infty,\wmu_{\max}^{\asy}(\overline L))$, we embed $H^0(X,M^{\otimes D})$ into $E_{\hn,pD}^{\lambda}$ via $s^{dD}$. Then the homomorphism
\[\bigoplus_{D\geqslant0}H^0(X,M^{\otimes D})\longrightarrow\bigoplus_{D\geqslant0}H^0(X,L^{\otimes pD})\]
factors through $\bigoplus\limits_{n\geqslant0} E_{\hn,np}^{\lambda}$.
\end{proof}

Same as the argument in \cite[Corollary 3.13]{Chen10}, we have the following result.
\begin{coro}\label{approximable of E_D^lambda}
We keep all the notations and conditions in Proposition \ref{contain an ample divisor}. The graded linear series \eqref{graded linear series modified by lambda} over $S_0=(K,\{0\},|\ndot|_0)$ is approximable for any $\lambda\in\left(-\infty,\wmu^{\asy}_{\max}(\overline L)\right)$.
\end{coro}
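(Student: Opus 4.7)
The plan is to reduce this corollary directly to Proposition \ref{contain an ample divisor => approximable} (Theorem 3.4 of \cite{Chen10}), which asserts that any graded linear series containing an ample divisor and with non-vanishing homogeneous parts in large degree is approximable. Since Proposition \ref{contain an ample divisor} has just established that $\overline E^{\lambda}_{\hn,\bullet}$ contains an ample divisor for every $\lambda\in(-\infty,\wmu_{\max}^{\asy}(\overline L))$, only the non-vanishing hypothesis remains to be verified.

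First, I would check that $E_{\hn,D}^{\lambda}\neq 0$ for all sufficiently large $D$. By Proposition \ref{criterion of arithmetically big}, we have $\wmu_{\max}^{\asy}(\overline L)=\lambda_{\max}^{\asy}(\overline E_{\hn,\bullet})$, so the hypothesis $\lambda<\wmu_{\max}^{\asy}(\overline L)$ means $\lambda<\lambda_{\max}^{\asy}(\overline E_{\hn,\bullet})$. By the definition of $\lambda_{\max}^{\asy}$ as the limit of $\lambda_{\max}(E_{\hn,n})/n$ (whose existence is given by Proposition \ref{convergence of measures} applied to $\overline E_{\hn,\bullet}$), for $n$ large enough there exists $s\in H^0(X,L^{\otimes n})\smallsetminus\{0\}$ with $-\tfrac{1}{n}\log\|s\|_{\sp,n}>\lambda$, equivalently $\|s\|_{\sp,n}\leqslant\exp(-n\lambda)$. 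By definition \eqref{definition of E_{hn,D}} this means $s\in E_{\hn,n}^{\lambda}$, so the non-vanishing holds. (This argument is already implicit in the proof of Proposition \ref{contain an ample divisor}.)

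Second, combining the conclusion of Proposition \ref{contain an ample divisor} with the non-vanishing just established, Proposition \ref{contain an ample divisor => approximable} applies to $\overline E^{\lambda}_{\hn,\bullet}$ viewed as a graded linear series of $L$ (the ambient graded algebra $\bigoplus_D H^0(X,L^{\otimes D})$ is the same; only the norms differ, and the notion of approximability is purely $K$-linear-algebraic). This yields the approximability asserted by the corollary.

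Since the work has already been done by Proposition \ref{contain an ample divisor} and its verification that $E_{\hn,n}^{\lambda}\neq 0$ in large degree, there is no real obstacle left: the proof is a one-line citation of Proposition \ref{contain an ample divisor => approximable}, paralleling \cite[Corollary 3.13]{Chen10}. The only subtle point to flag is that the notion of approximability in Definition \ref{definition of approximable algebra} depends only on the $K$-algebra structure of $E^{\lambda}_{\hn,\bullet}$ and on dimensions of images of multiplication maps, so nothing about the norm structure over $S_0$ enters the verification.
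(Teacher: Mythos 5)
Your proposal is correct and takes exactly the paper's route: cite Proposition~\ref{contain an ample divisor => approximable}, with the ample-divisor hypothesis supplied by Proposition~\ref{contain an ample divisor} and the non-vanishing hypothesis extracted (as you note) from the argument inside its proof. The paper's own proof is just the one-line citation; your extra verification of $E_{\hn,D}^{\lambda}\neq 0$ and the remark that approximability is a purely $K$-linear-algebraic property are harmless elaborations of what the paper leaves implicit.
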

\begin{proof}
By Proposition \ref{contain an ample divisor => approximable}, we have the assertion from Proposition \ref{contain an ample divisor} directly.
\end{proof}
\subsubsection{}
In \cite[Proposition 3.11]{Chen10}, a similar result of Proposition \ref{contain an ample divisor} is proved by an arithmetic analogue of Kodaira decomposition \cite[Corollary 2.4]{Yuan2008} of X. Yuan. But it is not valid over a general adelic curve, and see Example \ref{conter-exmaple of arithmetic Kodaira decompostion} below for a counterexample. To avoid this obstruction, we use a method quite similar to that of \cite{Boucksom_Chen} to prove Proposition \ref{contain an ample divisor}, and see also \cite[\S 6.3]{ChenMoriwaki2020}.
\begin{exem}\label{conter-exmaple of arithmetic Kodaira decompostion}
Let $K$ be a field, and $S_3=(K,\{1,2,3\},|\ndot|_0)$, where the trivially valued absolute $|\ndot|_0$ (see Example \ref{example: trivially valued field} for the precise definition) is counted three times in a discrete $\sigma$-algebra. We consider the universal line bundle $\O(1)$ on $\mathbb P_K^1$, and we denote by $x_0,x_1$ the basis of $H^0(\mathbb P^1_K, \O(1))$ with respect to the homogeneous coordinate.

Let $\overline{\O(1)}=(\O(1),(\|\ndot\|_i)_{i=1}^3)$ be the adelic line bundle on $\mathbb P^1_K$, where the induced supremum norms $\|\ndot\|_{i=1}^3$ on $H^0(\mathbb P^1_K, \O(1))$ satisfies
\[\begin{cases}
\|x_i\|_1=e^{-\frac{1}{2},},\text{ where }i=0,1;\\
\|x_i\|_2=e^{-\frac{1}{2},},\text{ where }i=0,1;\\
\|x_i\|_3=e^{\frac{3}{4},},\text{ where }i=0,1.
\end{cases}\]
Then $\overline{\O(1)}$ is an arithmetically big line bundle on $\mathbb P^1_K$ over $S_3$, since \[\adeg\left(H^0(\mathbb P^1_K,\O(D)),(\|\ndot\|_i)_{i=1}^3\right)=\frac{1}{4}D(D+1)\]for all $D\in\mathbb N^+$.

If there exist a $D\in\mathbb N$, an adelic ample line bundle $\overline A$ and an line bundle $\overline M$ have a non-zero small section on $\mathbb P^1_K$, such that $\overline{\O(D)}=\overline A\otimes \overline M$, where $\overline{\O(D)}$ is equipped with the induced metrics from the one introduced above. Since $\Pic(\mathbb P^1_K)=\mathbb Z$, then there exists a $D'\in\mathbb N$ satisfying $D'<D+$, such that
\[\overline{\O(D)}=\overline{\O(D')}\otimes \overline{\O(D-D')},\]
where $\overline{\O(D')}$ is ample and $\overline{\O(D-D')}$ has a non-zero small section. By the definition of arithmetically ample line bundle, all the global sections of $\overline{\O(D')}$ should be of the norms smaller than $1$. But by the choice of $\|\ndot\|_3$ in $H^0(\mathbb P^1_K, \O(1))$ over $|\ndot|_0$, all the global sections of $\overline{\O(D-D')}$ should be equipped with a norm strictly larger than $1$ with respect to the third $\|\ndot\|_0$ in $S_3$. So there will be no small section in $\overline{\O(D-D')}$, which shows there is no arithmetic Kodaira decomposition in this case.
\end{exem}
\subsection{Fujita approximation of graded linear series}
Let $p\in\mathbb N$ such that $\overline{E}_{\hn,p}^0\neq0$, where we follow the notation in \eqref{definition of E_{hn,D}}. In this case, we define $\overline E_\bullet^{(p)}=\bigoplus\limits_{n\geqslant 0}\overline E^{(p)}_{\hn,np}$ to be the graded $K$-algebra generated by $\overline{E}_{\hn,p}^0$, which is considered as a normed graded linear series over $S_0=(K,\{0\},|\ndot|_0)$ defined in Example \ref{example: trivially valued field}.

The following result is an analogue of \cite[Theorem 4.1]{Chen10} over $S_0$, whose proof is quite similar to the ancient one.
\begin{prop}\label{Fujita approximation of a graded linear series}
Let $X$ be a projective variety over $\spec K$, and $\overline L$ be an arithmetically big line bundle on $X$ over a proper adelic curve $S=(K,(\Omega,\mathcal A,\nu),\phi)$, where either $\mathcal A$ is a discrete $\sigma$-algebra or $K$ has a countable sub-field which is dense in $K_w$ for all $w\in\Omega$. With all the above notations and constructions, we have
\[\widehat{\vol}(\overline L)=\sup_{p\in\mathbb N}\widehat{\vol}(\overline E_\bullet^{(p)}).\]
\end{prop}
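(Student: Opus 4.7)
The plan is to establish the two inequalities separately. For the direction $\sup_{p} \widehat{\vol}(\overline E^{(p)}_\bullet) \leq \widehat{\vol}(\overline L)$, the sub-multiplicativity of the spectral norm established in \S\ref{from original series to spectral norm series} shows that the image of $S^n E^0_{\hn,p}$ in $H^0(X,L^{\otimes np})$ consists of sections of spectral norm $\leq 1$, so $\overline E^{(p)}_{\hn,np}$ embeds as an adelic sub-bundle of $\overline E_{\hn,np}$ over $S_0$. Any adelic sub-bundle of $\overline E^{(p)}_{\hn,np}$ is then also an adelic sub-bundle of $\overline E_{\hn,np}$, so by the very definition of the positive Arakelov degree one has $\adeg_+(\overline E^{(p)}_{\hn,np}) \leq \adeg_+(\overline E_{\hn,np})$. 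Dividing by $(np)^{\dim X+1}/(\dim X+1)!$, taking $\limsup_{n}$, and invoking Proposition \ref{same volume over S and S_0} yields the bound.

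For the reverse direction, I would mimic the proof of \cite[Theorem 4.1]{Chen10} in the adelic-curve setting. Fix $\epsilon>0$. Arithmetic bigness and Proposition \ref{criterion of arithmetically big} give $\wmu^{\asy}_{\max}(\overline L)>0$, so we may fix $\lambda_0 \in (0, \wmu^{\asy}_{\max}(\overline L))$ arbitrarily small. Corollary \ref{approximable of E_D^lambda} makes $\overline E^{\lambda_0}_{\hn,\bullet}$ an approximable filtrated graded algebra, so by Definition \ref{definition of approximable algebra}, for $p$ large and all sufficiently large $n$,
\[\dim_K \Image\bigl(S^n E^{\lambda_0}_{\hn,p} \to E^{\lambda_0}_{\hn,np}\bigr) \geq (1-\epsilon)\,\dim_K E^{\lambda_0}_{\hn,np}.\]
Each $s \in E^{\lambda_0}_{\hn,p}$ satisfies $\|s\|_{\sp,p} \leq e^{-p\lambda_0}$, and sub-multiplicativity of the spectral norm places each degree-$n$ product in the filtration piece $\mathcal F^{np\lambda_0}_{\sp}\overline E^{(p)}_{\hn,np}$. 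Translating these dimension estimates into contributions to $\adeg_+$ via Proposition \ref{Estimates by R-filtration} and Proposition \ref{computation of volume from asymptotic lambda+}, and using the measure convergence of Proposition \ref{convergence of measures} to integrate against the filtration parameter $\lambda$, one checks that $\widehat{\vol}(\overline E^{(p)}_\bullet)$ differs from $\widehat{\vol}(\overline L)$ by an amount controlled by $\lambda_0$ and $\epsilon$; taking both to $0$ concludes.

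The main obstacle is this last step: converting the dimension-level approximability into an arithmetic volume estimate. One must compare the rescaled probability measures attached to $\overline E^{(p)}_{\hn,np}$ and to $\overline E_{\hn,np}$ and show that the first approximates the second from below as $p \to \infty$. Because the algebra map $S^n E^0_{\hn,p} \to E_{\hn,np}$ only preserves the spectral-norm filtration in one direction, the approximation has to be performed slice by slice at every $\lambda \in [0, \wmu^{\asy}_{\max}(\overline L))$, and the $\tfrac12 \nu(\Omega_\infty)\dim(E)\log\dim(E)$ defect term from Proposition \ref{Estimates by R-filtration}, while sub-leading in $(np)^{\dim X+1}$, still demands delicate bookkeeping. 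The absence of a genuine arithmetic Kodaira decomposition over a general adelic curve is what forces us to thread the approximability through the spectral-norm filtration rather than to mimic Chen's original argument verbatim.
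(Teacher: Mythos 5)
Your first direction ($\sup_p \widehat{\vol}(\overline E_\bullet^{(p)}) \leqslant \widehat{\vol}(\overline L)$) is fine and is the same as what the paper implicitly uses: $E^{(p)}_{\hn,np}$ is a subspace of $E_{\hn,np}$ with the restricted norm, so $\adeg_+(\overline E^{(p)}_{\hn,np}) \leqslant \adeg_+(\overline E_{\hn,np})$, and one passes to the $\limsup$.

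For the hard direction your plan has a genuine gap precisely where you flag it. Fixing a single $\lambda_0$ and using approximability of $\overline E^{\lambda_0}_{\hn,\bullet}$ gives you a lower bound on $\dim_K \mathcal F^{np\lambda_0}_{\sp} E^{(p)}_{\hn,np}$ for that one filtration level. But $\widehat{\vol}(\overline E^{(p)}_\bullet)$ is computed (via Proposition \ref{computation of volume from asymptotic lambda+}) from the integral $\int_0^{+\infty} t\, \nu^{(p)}_{\sp}(\mathrm{d}t)$, which depends on the entire cumulative profile $t\mapsto \dim_K \mathcal F^t_{\sp} E^{(p)}_{\hn,np}$. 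Knowledge at one slice $t=np\lambda_0$ cannot control this integral, so "taking $\lambda_0$ and $\epsilon$ to $0$" does not by itself yield the claimed bound. You correctly note that the comparison must be done "slice by slice", but you never show how finitely many slices suffice nor how to convert the slice data into an integral estimate, and this is the entire content of the proof.

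The paper resolves this by choosing a finite subdivision $D\colon 0=t_0<t_1<\cdots<t_m<\wmu^{\asy}_{\max}(\overline E_{\hn,\bullet})$ with $\nu_{\sp}(\{t_1,\dots,t_m\})=0$, and observing that since each $\overline E^{t_i}_{\hn,\bullet}$ is approximable (Corollary \ref{approximable of E_D^lambda}) and there are only finitely many, a single $p$ approximates all of them simultaneously with error $\epsilon$. The inclusion $\Image(S^n E^{t_i}_{\hn,p}\to E_{\hn,np}) \subseteq \mathcal F^{npt_i}_{\sp} E^{(p)}_{\hn,np}$ (your sub-multiplicativity observation, applied at each $t_i$) then gives $\dim_K(\mathcal F^{npt_i}_{\sp} E^{(p)}_{\hn,np}) \geqslant (1-\epsilon)\dim_K(E^{t_i}_{\hn,np})$, and a Riemann-sum/Abel-summation argument bounds $\int_0^{+\infty} t\,\nu^{(p)}_{\sp,np}(\mathrm dt)$ from below by $(1-\epsilon)\int h_D\,\mathrm d\nu_{\sp,np}$, where $h_D$ is the step function associated to the division. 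Passing $n\to\infty$ (using weak convergence from Proposition \ref{convergence of measures}) and then refining $D$ so that $h_D \to \max\{t,0\}$ on the support of $\nu_{\sp}$ completes the argument. Note also that the defect term you worry about from Proposition \ref{Estimates by R-filtration} is already absorbed in Proposition \ref{computation of volume from asymptotic lambda+}, so no separate bookkeeping is required once you work at the level of the limit measure.
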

\begin{proof}
For any $n\in\mathbb N^+$, let $\nu_{\sp,n}=T_{\frac{1}{n}}\nu_{\left(E_{\hn,n},\mathcal F_{\sp}\right)}$, where the notation $T_{\frac{1}{n}}\nu_{\left(E_{\hn,n},\mathcal F_{\sp}\right)}$ is introduced in \S \ref{convergence of approximable algebras}, and the $\mathbb R$-filtration $\mathcal F_{\sp}$ is defined in \S \ref{from original series to spectral norm series} with respect to the spectral norms. Then by Proposition \ref{convergence of measures} and Corollary \ref{approximable of E_D^lambda}, the sequence $(\nu_{\sp,n})_{n\geqslant 1}$ converges weakly to a Borel probability measure, which is denoted by $\nu_{\sp}$.

Similarly, let $\nu_{\sp,n}^{(p)}=T_{\frac{1}{np}}\nu_{\left(E^{(p)}_{\hn,np},\mathcal F_{\sp}\right)}$, which also converges weakly to a Borel probability measure noted by $\nu_{\sp}^{(p)}$ by Proposition \ref{filtrated E_hn lambda}.

In order to prove the assertion, we will prove that the restriction of $\nu_{\sp}$ on $[0,+\infty)$ can be approximated well by $\nu_{\sp}^{(p)}$ for $p\in\mathbb N$.

\textbf{Step I. -- Simultaneous approximations.}

Let
\[D:\;0=t_0<t_1<\cdots<t_m<\wmu_{\max}^{\asy}(\overline E_{\hn,\bullet})\]
be a division of the interval $[0,\wmu_{\max}^{\asy}(\overline E_{\hn,\bullet}))$ such that
\begin{equation}\label{zero measure condition of division}
\nu_{\sp}(\{t_1,\ldots,t_m\})=0.
\end{equation}
We define the function
\[\begin{array}{rrcl}
h_D:&\mathbb R&\longrightarrow&\mathbb R\\
&x&\mapsto&\sum\limits_{i=0}^{m-1}t_i\mathbbm{1}_{[t_i,t_{i+1})}(x)+t_m\mathbbm{1}_{[t_m,+\infty)}(x).
\end{array}\]
By Corollary \ref{approximable of E_D^lambda} and Definition \ref{definition of approximable algebra}, for every $\epsilon>0$, there exists a $p\in\mathbb N$ sufficiently large, such that $E_\bullet^{(p)}$ approximates all $E^{t_i}_{\hn,\bullet}$ defined at \eqref{graded linear series modified by lambda} simultaneously, where $i=0,\ldots,m$. In other words, there exists an $N_0\in\mathbb N$, such that for every $n\geqslant N_0$, we have
\[\inf_{0\leqslant i\leqslant m}\frac{\dim_K\left(\im\left(S^nE_{\hn,p}^{t_i}\rightarrow E_{\hn,np}^{t_i}\right)\right)}{\dim_K(E_{\hn,np}^{t_i})}\geqslant 1-\epsilon\]
for all $i=0,\ldots,m$. Then by the above inequality, we obtain
\begin{eqnarray}\label{equality from appromable}
\dim_K(\mathcal F_{\sp}^{npt_i}(E_{\hn,np}^{(p)}))&\geqslant&\dim_K\left(\im\left(S^nE_{\hn,p}^{t_i}\rightarrow E_{\hn,np}^{t_i}\right)\right)\\
&\geqslant&(1-\epsilon)\dim_K(E_{\hn,np}^{t_i})\nonumber
\end{eqnarray}
for all $i=0,\ldots,m$.

\textbf{Step II. -- An upper bound of integrations.}

We note that
\begin{eqnarray*}
& &np\dim_K(E_{\hn,np}^{(p)})\int_0^{+\infty}t\nu_{\sp,np}^{(p)}(\mathrm dt)=-\int_0^{+\infty}t\mathrm d\dim_K(\mathcal F_{\sp}^tE_{\hn,np}^{(p)})\\
&=&\int_0^{+\infty}\dim_K(\mathcal F_{\sp}^tE_{\hn,np}^{(p)})\mathrm dt\geqslant \sum_{i=1}^mnp(t_i-t_{i-1})\dim_K(\mathcal F_{\sp}^{npt_i}E_{\hn,np}^{(p)})\\
&\geqslant&np(1-\epsilon)\sum_{i=1}^m(t_i-t_{i-1})\dim_K(E_{\hn,np}^{t_i}),
\end{eqnarray*}
where the first inequality is from the decreasing of $\mathcal F_{\sp}$, and the last inequality is from \eqref{equality from appromable} combined with the fact $E_{\hn,np}^{t_i}=\mathcal F_{\sp}^{npt_i}E_{\hn,np}$.

In addition, by applying Abel's summation formula to the last term of the above inequality, we have
\[\dim_K(E_{\hn,np}^{(p)})\int_0^{+\infty}t\nu_{\sp,np}^{(p)}(\mathrm dt)\geqslant(1-\epsilon)\dim_K(E_{\hn,np})\int_0^{+\infty}h_D(t)\nu_{\sp,np}(\mathrm dt).\]

\textbf{Step III. -- Estimates of arithmetic volumes.}

Let $d=\dim(X)$. By Proposition \ref{computation of volume from asymptotic lambda+}, we have
\begin{eqnarray*}
& &\lim_{n\to\infty}\frac{d+1}{(np)^{d+1}/(d+1)!}\dim_K(E_{\hn,np}^{(p)})\int_0^{+\infty}t\nu_{\sp,np}^{(p)}(\mathrm dt)\\
&=&(d+1) \vol(E_\bullet^{(p)})\int_0^{+\infty}t\nu_{\sp}^{(p)}(\mathrm dt)=\widehat{\vol}(E_\bullet^{(p)}).
\end{eqnarray*}
Then we have
\begin{eqnarray*}
\widehat{\vol}(E_\bullet^{(p)})&\geqslant&\lim_{n\to\infty}\frac{d+1}{(np)^{d+1}/(d+1)!}(1-\epsilon)\dim_K(E_{\hn,np})\int_0^{+\infty}h_D\mathrm d \nu_{\sp,np}\\
&=&(d+1)(1-\epsilon)\vol(L)\int_{\mathbb R}h_D\mathrm d \nu_{\sp},
\end{eqnarray*}
where the last equality is from \cite[Chap. IV \S 5 $n^\circ$ 12 Proposition 22]{Bourbaki65}.

We choose a sequence of subdivisions $(D_j)_{j\in\mathbb N}$ satisfying \eqref{zero measure condition of division}, such that the sequence of functions $(h_{D_j}(t))_{j\in\mathbb N}$ converges uniformly to
\[\max\{t,0\}-\max\{t-\wmu_{\max}^{\asy}(\overline E_{\hn,\bullet}),0\}\]
when $j\to\infty$. By definition, the support of $\nu_{\sp}$ is bounded from above by $\wmu_{\max}^{\asy}(\overline E_{\hn,\bullet})$, we obtain
\[\sup_{p\in\mathbb N}\widehat{\vol}(E_\bullet^{(p)})\geqslant (d+1)(1-\epsilon)\vol(L)\int_0^{+\infty}t\nu_{\sp}(\mathrm dt)=(1-\epsilon)\widehat{\vol}(\overline L),\]
where the last equality is from Proposition \ref{same volume over S and S_0} on the equality of arithmetic volumes over $S$ and $S_0$.
\end{proof}
\begin{rema}
For a general graded linear series, if it satisfies the property of Proposition \ref{Fujita approximation of a graded linear series}, we say that this graded linear series satisfies \textit{Fujita approximation}. We would like to emphasize that this notion is different from those in \cite[Definition 3.7]{Chen2018} or \cite[\S 6]{ChenIkoma2020}, though they are somewhat similar.
\end{rema}
\section{Fujita approximation of adelic line bundles}\label{Chap5}
From the Fujita approximation of graded linear series proved in Proposition \ref{Fujita approximation of a graded linear series}, we will prove the arithmetic Fujita approximation of an arithmetically big line bundle with coefficients in $\mathbb Z$, $\mathbb Q$ and $\mathbb R$.

\subsection{Fujita approximation of big adelic line bundles}
First, we will prove the arithmetic Fujita approximation of usual adelic line bundles. We will prove the nef version first, and apply it to prove the ample version. In order to apply Proposition \ref{properties of nef}, we assume the base field is perfect.
\begin{theo}\label{arithmetic Fujita approximation of nef}
Let $S=(K,(\Omega,\mathcal A,\nu),\phi)$ be a proper adelic curve with a perfect $K$, where either $\mathcal A$ is a discrete $\sigma$-algebra or $K$ has a countable sub-field which is dense in $K_w$ for all $w\in\Omega$. Let $X$ be a projective variety with $\dim(X)=d$, and $\overline L$ be an arithmetically big line bundle on $X$ over $S$. Then for every $\epsilon>0$, there exists a birational morphism $v:X'\rightarrow X$ together with a positive integer $p$, such that the decomposition $v^*(\overline L^{\otimes p})=\overline M\otimes \overline G$ satisfies:
\begin{enumerate}
\item[(1)] $\overline M$ has a non-zero small section and $\overline G$ is arithmetically nef;
\item[(2)] $p^{-(d+1)}\widehat{\vol}(\overline G)\geqslant\widehat{\vol}(\overline L)-\epsilon$.
\end{enumerate}
\end{theo}
\begin{proof}
We devide the proof into three steps.

\textbf{Step I. -- Determining the geometric decomposition.}

Let $\pi:X\rightarrow\spec K$ be the structural morphism. For any $p\geqslant1$ such that $E_{\hn,p}^0\neq0$ (see Proposition \ref{contain an ample divisor} for the existence of such $p\in\mathbb N$), let
\[\phi_p:X_p\rightarrow X\]
be the blow-up along the base locus of $E_{\hn,p}^0$, which means
\[X_p=\proj\left(\im\left(\bigoplus_{n\geqslant0}\pi^*(E_{\hn,np}^{(p)})\longrightarrow \bigoplus_{n\geqslant0}L^{\otimes np}\right)\right).\]

Let $G_p=\O_{X_p}(1)$, and $M_p$ be the line bundle on $X_p$ with respect to the exceptional divisor. Let $s$ be a global section of $M_p$ which trivializes $M_p$ outside the exceptional divisor. Then by definition, we have
\[\phi_p^*(L^{\otimes p})=G_p\otimes M_p.\]

The canonical morphism
\[\phi_p^*\pi^*(E_{\hn,p}^0)\rightarrow G_p\]
induces a projective embedding $i_p:X_p\rightarrow\mathbb P\left(E_{\hn,p}^0\right)$ with $i^*\left(\O_{E_{\hn,p}^0}(1)\right)=G_p$. Then the restriction of global sections of $\O_{E_{\hn,p}^0}(n)$ on $X_p$ gives an injective homomorphism
\begin{equation}
\im\left(S^n E_{\hn,p}^0\rightarrow E_{\hn,np}\right)=E_{\hn,np}^{(p)}\rightarrow H^0(X_p,G_p^{\otimes n}),
\end{equation}
where $H^0(X_p,G_p^{\otimes n})$ is considered as a sub-$K$-vector space of $H^0(X_p,\phi_p^*(L^{\otimes p}))$ via the section $s$.

\textbf{Step II. -- Constructing the metrics of line bundles. }

We consider $\overline E^{(p)}_\bullet$ as a normed graded linear series over $S$, which is equipped with the subspace norms of $\overline E_\bullet=\bigoplus\limits_{D\geqslant0}\overline E_D$. For any $n\geqslant 1$ and $w\in\Omega$, let $\|\ndot\|_{n,w}$ be the quotient norm on $G_{p,w}=G_p\times_{\spec K}\spec K_w$ induced by the surjective homomorphism
\[\phi_p^*\pi^*(E_{\hn,np}^{(p)})\rightarrow G_p^{\otimes n},\]
where $E_{\hn,np}^{(p)}$ is equipped with the superior norm $\|\ndot\|_{\sup,w}$ with respect to $w\in\Omega$.

In fact, for any $x\in X_{p,w}(\overline K_w)$ outside the exceptional divisor, it corresponds to a $1$-dimensional quotient of $E_{\hn,p}^0$. For any $n\geqslant1$, this quotient induces a $1$-dimensional quotient $\ell_{n,x}$ of $E_{\hn,np}^{(p)}$.

By Hahn-Banach theorem, every linear form on $\ker(E_{\hn,np}^{(p)}\rightarrow\ell_{n,x})$ can be extended to a linear form on $E_{\hn,np}^{(p)}$. In other words, there exists a section $w\in E_{\hn,np}^{(p)}$, whose image in $G_{p,w}^{\otimes n}(x)$ has norm $\|w\|_{\sup,w}$. As a section of $L_w^{\otimes np}$ on $X_v(\overline K_w)$, we have $\|w(x)\|_w\leqslant\|w\|_{\sup,w}$, where $\|\ndot\|_w$ is the metric on $\overline L$ indexed by $v\in\Omega$. Then for any section $u\in G_{p,w}$ over a neighborhood of $x$, we may assume that $u(x)^{\otimes n}$ equals to the image of $w(x)$ in $G_{p,w}^{\otimes n}(x)$ by dilation, then we have
\[\|u(x)\|_{n,w}=\|w\|_{\sup,w}^{1/n}\geqslant\|w(x)\|_{w}^{1/n}=\|u(x)\otimes s(x)\|_w.\]

Let $\alpha_n=(\|\ndot\|_{n,w})_{w\in\Omega}$ be a family of adelic metrics on $G_{p}$, and we define $(M_p,\beta_n)=\phi_p^*(\overline L)\otimes(G_p,\alpha_n)^\vee$. In this sense, $s$ is an small section in $(M_p,\beta_n)$, then it is an adelic line bundle have a non-zero small section.

\textbf{Step III. -- The relation among arithmetic volumes.}

By the construction in Step II, for any $n\geqslant1$, we have
\[p^{-(d+1)}\widehat{\vol}(G_p,\alpha_n)\geqslant \widehat{\vol}(E_\bullet^{(p)},\alpha_n),\]
where
\[\widehat{\vol}(E_\bullet^{(p)},\alpha_n)=\lim_{m\rightarrow\infty}\frac{\adeg_+(E_{\hn,mp}^{(p)},(\|\ndot\|_{n,\sup,w})_{w\in\Omega})}{m^{d+1}/(d+1)!},\]
and $\|\ndot\|_{n,\sup,w}$ is the superior norm induced by $\alpha_n=(\|\ndot\|_{n,w})_{w\in\Omega}$ on $G_p$. Then by \cite[Corollary A.2]{Chen10}, we have
\[\limsup_{n\to\infty}\widehat{\vol}(E_\bullet^{(p)},\alpha_n)\geqslant\widehat{\vol}(\overline E_\bullet^{(p)}).\]

By Proposition \ref{Fujita approximation of a graded linear series}, for any $\epsilon>0$, there exists a $p\in\mathbb N$, such that
\[\limsup_{n\to\infty}p^{-(d+1)}\widehat{\vol}(G_p,\alpha_n)\geqslant\widehat{\vol}(\overline L)-\frac{\epsilon}{2}.\]
Then there exists an $N_0\in\mathbb N$, such that
\[p^{-(d+1)}\widehat{\vol}(G_p,\alpha_n)-\frac{\epsilon}{2}\geqslant\widehat{\vol}(\overline L)-\frac{\epsilon}{2}\]
for infinitely many $n\geqslant N_0$.

In addition, since $\wmu_{\min}(\overline E_{\hn,np}^{(p)})\geqslant0$ for all $n\geqslant1$, then it has a positive asymptotic minimal slope. Then by Proposition \ref{properties of nef}, we obtain that $(G_p,\alpha_n)$ is arithmetically nef with the fact that $K$ is perfect.

To sum up, we obtain the assertion.
\end{proof}

Next, we obtain the following ample version from Theorem \ref{arithmetic Fujita approximation of nef} directly. The idea is absorbed from \cite[Remark 4.4]{Chen10} and \cite[Corollary 8.5.7]{ChenMoriwaki2022}.
\begin{coro}\label{arithmetic Fujita approximation of ample}
We keep all the notations and conditions in Theorem \ref{arithmetic Fujita approximation of nef}. Then we are able to require $\overline G$ to be arithmetically ample in the term (1) of Theorem \ref{arithmetic Fujita approximation of nef}.
\end{coro}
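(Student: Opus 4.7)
The plan is to revisit the construction used in the proof of Theorem \ref{arithmetic Fujita approximation of nef}, but starting from the shifted graded linear series $\overline E^{\lambda}_{\hn,\bullet}$ of \eqref{graded linear series modified by lambda} for a small $\lambda>0$, in place of $\overline E^{0}_{\hn,\bullet}$. The relative ampleness of the output $\overline G=(G_p,\alpha_n)$ is automatic in that construction (the tautological bundle $G_p=\O_{X_p}(1)$ is very ample, and the quotient metric $\alpha_n$ is semi-positive), so by Proposition \ref{properties of ample} it suffices to promote the bound $\wmu_{\min}^{\asy}(\overline G)\geqslant 0$ of the nef version to $\wmu_{\min}^{\asy}(\overline G)\geqslant \lambda>0$; the strictly positive shift $\lambda$ in the spectral-norm cut-off is exactly what supplies this strict positivity.

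Fix $\epsilon>0$. By Proposition \ref{criterion of arithmetically big} and the arithmetic bigness of $\overline L$, the interval $(0,\wmu_{\max}^{\asy}(\overline L))$ is nonempty, and by Corollary \ref{approximable of E_D^lambda} every $\overline E^{\lambda}_{\hn,\bullet}$ with $\lambda$ in that interval is approximable. I would re-run the argument of Proposition \ref{Fujita approximation of a graded linear series} using subdivisions of the form
\[
\lambda=t_0<t_1<\cdots<t_m<\wmu_{\max}^{\asy}(\overline E_{\hn,\bullet})
\]
and with the subalgebra $\overline E^{(p,\lambda)}_\bullet$ generated by $E^{\lambda}_{\hn,p}$ playing the role of $\overline E^{(p)}_\bullet$. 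The same chain of inequalities as in Steps I--III of that proof yields
\[
\sup_{p\in\mathbb N}\widehat{\vol}(\overline E^{(p,\lambda)}_\bullet)\geqslant(\dim(X)+1)\vol(L)\int_{\lambda}^{+\infty}t\,\nu_{\sp}(\mathrm d t).
\]
Since $\nu_{\sp}$ is supported in $(-\infty,\wmu_{\max}^{\asy}(\overline L)]$, dominated convergence gives that the right-hand side tends to $\widehat{\vol}(\overline L)$ as $\lambda\to 0^+$. Thus, for $\lambda$ small enough and $p$ large enough, $\widehat{\vol}(\overline E^{(p,\lambda)}_\bullet)\geqslant\widehat{\vol}(\overline L)-\epsilon/2$.

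With such a $\lambda$ and $p$ in hand, I mimic Steps I--II of the proof of Theorem \ref{arithmetic Fujita approximation of nef}: let $v=\phi_p:X'\to X$ be the blow-up along the base locus of $E^{\lambda}_{\hn,p}$, write $v^*(\overline L^{\otimes p})=\overline M\otimes\overline G$ with $G_p=\O_{X_p}(1)$ and $\overline M$ the adelic bundle associated with the exceptional divisor, and build $\alpha_n$ on $G_p$ as the quotient metric induced by $\phi_p^*\pi^*\overline E^{(p,\lambda)}_{\hn,np}\twoheadrightarrow G_p^{\otimes n}$ (the norms on $E^{(p,\lambda)}_{\hn,np}$ are the subspace ones from $\overline E_{np}$ on $S$). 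The volume argument of Step III goes through with $\overline E^{(p,\lambda)}_\bullet$ in place of $\overline E^{(p)}_\bullet$ and delivers $p^{-(\dim(X)+1)}\widehat{\vol}(\overline G)\geqslant\widehat{\vol}(\overline L)-\epsilon$ along a suitable subsequence in $n$. The new point is the minimal-slope bound: because every element of $E^{\lambda}_{\hn,p}$ lies in $\mathcal F^{\lambda p}_{\sp}\overline E_{\hn,p}$, the generated subalgebra satisfies $E^{(p,\lambda)}_{\hn,np}\subseteq\mathcal F^{\lambda np}_{\sp}\overline E_{\hn,np}$, which transfers through the quotient-metric construction to $\wmu_{\min}^{\asy}(\overline G)\geqslant\lambda>0$. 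Combined with the relative ampleness already noted, Proposition \ref{properties of ample} (using the perfectness of $K$) then gives that $\overline G$ is arithmetically ample, completing the plan.

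The main obstacle is the last step: the shift $\lambda$ is recorded as a spectral-norm cut-off over the trivially valued curve $S_0$, while the asymptotic minimal slope we need to bound from below is an Arakelov invariant over the original adelic curve $S$. Turning $\|\ndot\|_{\sp,np}\leqslant e^{-np\lambda}$ on $E^{(p,\lambda)}_{\hn,np}$ into an honest lower bound $\wmu_{\min}(\overline E^{(p,\lambda)}_{\hn,np})\geqslant np\lambda+o(n)$, and then propagating it to $\wmu_{\min}^{\asy}(\overline G)\geqslant\lambda$ through the quotient-metric definition of $\alpha_n$, requires careful use of the comparison machinery of \S \ref{from original series to spectral norm series}--\S \ref{normed graded linear series over S_0}, and this is also the point where the perfectness of $K$ enters genuinely.
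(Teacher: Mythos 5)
Your route is genuinely different from the paper's, and the gap you flag at the end is a real one, not a formality. The paper does \emph{not} re-run the blow-up construction with a shifted cut-off; it applies Theorem~\ref{arithmetic Fujita approximation of nef} as a black box and perturbs at the level of adelic line bundles. Concretely: pick any arithmetically big $\overline B$ on $X$; by continuity of $\widehat{\vol}$ (\cite[Corollary 6.4.10]{ChenMoriwaki2020}), $\overline L^{\otimes m}\otimes\overline B^{\vee}$ is arithmetically big for $m$ large, so the nef theorem yields $\phi^*(\overline L^{\otimes mp}\otimes\overline B^{\otimes(-p)})=\overline G\otimes\overline M$ with $\overline G$ nef, $\overline M$ effective, and the volume bound. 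Then one fixes an arithmetically ample $\overline B'$ on $X'$, uses bigness of $\phi^*\overline B$ to find $q$ with $\phi^*(\overline B^{\otimes q})\otimes\overline B'^{\vee}$ effective, and rewrites $\phi^*(\overline L^{\otimes mpq})=(\overline G^{\otimes q}\otimes\overline B')\otimes(\overline M^{\otimes q}\otimes\phi^*(\overline B^{\otimes pq})\otimes\overline B'^{\vee})$; the first factor is nef-plus-ample hence arithmetically ample, the second is effective, and continuity of $\widehat{\vol}$ recovers the $\epsilon$-approximation. This is a standard ``subtract a big bundle, add back a small ample'' perturbation: it never touches the graded linear series machinery again.

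Your alternative --- shifting the spectral cut-off from $0$ to a small $\lambda>0$ so that the output inherits $\wmu_{\min}^{\asy}\geqslant\lambda$ --- is conceptually attractive but has a genuine hole exactly where you say it does. The inclusion $E^{(p,\lambda)}_{\hn,np}\subseteq\mathcal F^{np\lambda}_{\sp}\overline E_{\hn,np}$ is a statement about the spectral norm over the trivially valued curve $S_0$: it controls the \emph{asymptotic} behaviour of the Harder--Narasimhan norms $\|s^n\|^{\hn}_{nD}$, not the value $\|s\|^{\hn}_D$ itself, and a fortiori not the sup-norms $\|\ndot\|_{\sup,w}$ over $S$ that actually enter the definition of the quotient metric $\alpha_n$ in Step II. In particular, nothing in the construction forces $\wmu_{\min}\bigl(\overline E^{(p,\lambda)}_{\hn,np},(\|\ndot\|_{\sup,w})_w\bigr)\geqslant np\lambda+o(n)$ over $S$: for a fixed $n$ the submultiplicativity defect $f(n)=\tfrac{3}{2}\nu(\Omega_\infty)\log n$ from \cite[Proposition 4.1.3]{ChenMoriwaki2022} is not yet absorbed, and the spectral norm, being the $n\to\infty$ regularisation, can be strictly smaller than the HN norm at the finite stage $p$ where the generators are chosen. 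You would also need to push the bound through the fixed-$n$ quotient metric $\alpha_n$ and then through a second asymptotic ($m\to\infty$) when computing $\wmu_{\min}^{\asy}(G_p,\alpha_n)$. None of these steps is impossible, but each requires a comparison estimate that your sketch neither supplies nor points to in the literature, and together they amount to re-proving a sizeable portion of \cite[\S 3--4]{ChenMoriwaki2022}. By contrast, the paper's perturbation argument sidesteps the $S_0$-to-$S$ transfer entirely, at the modest cost of shrinking $\epsilon$ and enlarging $p$ to $mpq$.
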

\begin{proof}
Let $\overline B$ be an arithmetically big line bundle on $X$. Then by \cite[Corollary 6.4.10]{ChenMoriwaki2020}, for $m\in\mathbb N$ sufficiently large, we have $\overline L^{\otimes m}\otimes \overline B^\vee$ is arithmetically big. By Theorem \ref{arithmetic Fujita approximation of nef}, for any $\epsilon>0$, there exists a birational morphism $\phi:X'\rightarrow X$, a $p\in\mathbb N$, and a decomposition
\[\phi^*(\overline L^{\otimes mp}\otimes \overline B^{\otimes (-p)})= \overline G\otimes \overline M\]
with $\overline G$ arithmetically nef and $\overline M$ having a non-zero small section, such that
\[p^{-(d+1)}\widehat{\vol}(\overline G)\geqslant \widehat{\vol}(\overline L^{\otimes m}\otimes \overline B^\vee)-\epsilon.\]
Let $\overline B'$ be an arithmetically ample line bundle on $X'$. Since $\phi^*(\overline B)$ is arithmetically big by \cite[Remark 5.7]{Luo2025}, there exists a $q\in\mathbb N$, such that $\phi^*(\overline B^{\otimes q})\otimes \overline B'^\vee$ has a non-zero small section. We consider
\[\phi^*(\overline L^{\otimes mpq})=(\overline G^{\otimes q}\otimes \overline B')\otimes(\overline M^{\otimes q}\otimes \phi^*(\overline B^{\otimes pq})\otimes \overline B'^\vee).\]
In the above decomposition, $\overline G':=\overline G^{\otimes q}\otimes \overline B'$ is arithmetically ample, $\overline M^{\otimes q}\otimes \phi^*(\overline B^{\otimes pq})\otimes \overline B'^\vee$ has a non-zero small section, and
\[(mpq)^{-(d+1)}\widehat{\vol}(\overline G')\geqslant(mpq)^{-(d+1)}\widehat{\vol}(\overline G^{\otimes q})\geqslant m^{-(d+1)}(\widehat{\vol}(\overline L^{\otimes m}\otimes \overline B^\vee)-\epsilon).\]
By the continuity of the arithmetic volume function (see \cite[Corollary 6.4.10]{ChenMoriwaki2020}), we have the result.
\end{proof}
\subsection{Case of $\mathbb K$-line bundles}
In this part we will prove the arithmetic Fujita approximation is verified for adelic $\mathbb Q$-line bundles and $\mathbb R$-line bundles over adelic curves.
\subsubsection{}
First, we prove the case of adelic $\mathbb Q$-line bundle, which is a quite straightforward corollary from the usual case.
\begin{coro}\label{arithmetic Fujita approximation of Q-ample}
Let $S=(K,(\Omega,\mathcal A,\nu),\phi)$ be a proper adelic curve with a perfect $K$, where either $\mathcal A$ is a discrete $\sigma$-algebra or $K$ has a countable sub-field which is dense in $K_w$ for all $w\in\Omega$. Let $X$ be a projective variety over $\spec K$ with $\dim(X)=d$, and $\overline L$ be an arithmetically big $\mathbb Q$-line bundle on $X$ over $S$. For every $\epsilon>0$, there exists a birational morphism $\phi:X'\rightarrow X$ together with a $p\in\mathbb N^+$, such that the decomposition $\phi^*(\overline L^{\otimes p})=\overline M\otimes \overline G$ satisfies:
\begin{enumerate}
\item[(1)] $\overline M$ and $\overline G$ are adelic line bundles, where $\overline M$ has a non-zero small section and $\overline G$ is arithmetically ample;
\item[(2)] $p^{-(d+1)}\widehat{\vol}(\overline G)\geqslant\widehat{\vol}(\overline L)-\epsilon$.
\end{enumerate}
\end{coro}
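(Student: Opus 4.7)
The plan is to reduce the $\mathbb{Q}$-coefficient case to the integer case (Corollary \ref{arithmetic Fujita approximation of ample}) by clearing denominators. By definition of an adelic $\mathbb{Q}$-line bundle, there exists an integer $N \in \mathbb{N}^+$ such that $\overline L_0 := \overline L^{\otimes N}$ is a usual adelic line bundle on $X$. Since the arithmetic volume is $(d+1)$-homogeneous on tensor powers, we have $\widehat{\vol}(\overline L_0) = N^{d+1}\widehat{\vol}(\overline L) > 0$, so $\overline L_0$ is arithmetically big in the usual sense, and Corollary \ref{arithmetic Fujita approximation of ample} applies to it.

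Next, given $\epsilon > 0$, I would apply Corollary \ref{arithmetic Fujita approximation of ample} to $\overline L_0$ with tolerance $\epsilon' := N^{d+1}\epsilon$. This yields a birational morphism $\phi: X' \to X$, an integer $p' \in \mathbb{N}^+$, and a decomposition
\[
\phi^*(\overline L_0^{\otimes p'}) = \overline M \otimes \overline G
\]
with $\overline M$ effective, $\overline G$ arithmetically ample, and $(p')^{-(d+1)}\widehat{\vol}(\overline G) \geqslant \widehat{\vol}(\overline L_0) - \epsilon'$.

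Finally, set $p := N p'$. Then tautologically $\phi^*(\overline L^{\otimes p}) = \phi^*(\overline L_0^{\otimes p'}) = \overline M \otimes \overline G$, giving the required decomposition with $\overline M$ and $\overline G$ both usual (integer) adelic line bundles satisfying the positivity conclusions of part (1). For the volume estimate, using $p^{-(d+1)} = N^{-(d+1)}(p')^{-(d+1)}$ and the choice of $\epsilon'$,
\[
p^{-(d+1)}\widehat{\vol}(\overline G) \;\geqslant\; N^{-(d+1)}\bigl(\widehat{\vol}(\overline L_0) - \epsilon'\bigr) \;=\; \widehat{\vol}(\overline L) - \epsilon,
\]
which is exactly (2). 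There is essentially no obstacle here: the argument is formal once one observes that arithmetic bigness, effectivity, ampleness, and the volume inequality all behave cleanly under the substitution $\overline L \leadsto \overline L^{\otimes N}$, and that the $\mathbb{Q}$-scaling $p = Np'$ absorbs the factor $N^{d+1}$ coming from the homogeneity of $\widehat{\vol}$. The only minor check is that $\overline M$ and $\overline G$ remain usual adelic line bundles after the substitution, which is immediate since Corollary \ref{arithmetic Fujita approximation of ample} is applied to an integer-coefficient $\overline L_0$.
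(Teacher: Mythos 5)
Your proof is correct and takes essentially the same route as the paper: reduce to the integer case by choosing $N$ so that $\overline L^{\otimes N}$ is a usual adelic line bundle, then apply Corollary \ref{arithmetic Fujita approximation of ample}. You spell out the homogeneity bookkeeping ($p = Np'$, tolerance $\epsilon' = N^{d+1}\epsilon$) that the paper leaves implicit, which is a welcome clarification but not a different argument.
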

\begin{proof}
By definition, there exists an $m\in\mathbb N$, such that $\overline L^{\otimes m}$ is a usual arithmetically big line bundle. Then we obtain the assertion by applying Corollary \ref{arithmetic Fujita approximation of ample} to $\overline L^{\otimes m}$.
\end{proof}
\subsubsection{}
Next, we prove the case of adelic $\mathbb R$-line bundle, and see \cite[\S 6.4.3]{ChenMoriwaki2020} for an introduction to this notion. For this target, we prove an auxiliary lemma first, which allows us to approximate an arithmetically big $\mathbb R$-line bundle by an arithmetically big $\mathbb Q$-line bundle along an effective direction.

Let $\overline{L}_1,\ldots,\overline{L}_n$ be a family of adelic line bundle, and $a_1,\ldots,a_n\in\mathbb R$. We denote by $\sum\limits_{i=1}^na_iL_i$ and $\sum\limits_{i=1}^na_i\overline{L}_i$ the tensor products of line bundles $\bigotimes\limits_{i=1}^nL_i^{\otimes a_i}$ and $\bigotimes\limits_{i=1}^n\overline{L}_i^{\otimes a_i}$ to simplify the notations in the proof below, which are a usual $\mathbb R$-line bundle and an adelic $\mathbb R$-line bundle respectively.

\begin{lemm}\label{density of Q-line bundles}
Let $S=(K,(\Omega,\mathcal A,\nu),\phi)$ be a proper adelic curve, and $X$ be a projective variety over $\spec K$. Let $\overline L$ be an arithmetically big $\mathbb R$-line bundle on $X$. Then for all $\epsilon>0$, there exists an arithmetically big $\mathbb Q$-line bundle $\overline L'$, such that $\overline L-\overline L'$ has a non-zero small section and
\[0<\widehat{\vol}(\overline L)-\widehat{\vol}(\overline L')<\epsilon.\]
\end{lemm}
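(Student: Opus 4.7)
\textbf{The plan} is to realize $\overline L$ as a small $\mathbb R$-perturbation of an arithmetically big $\mathbb Q$-line bundle $\overline L'$, arranged so that the perturbation $\overline L-\overline L'$ is itself $\mathbb R$-effective. The mechanism is to round, coefficient by coefficient, a Kodaira-type decomposition of $\overline L$ to rationals from below.

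First I would invoke an arithmetic Kodaira-type lemma (valid in the adelic-curve framework of \cite{ChenMoriwaki2020,ChenMoriwaki2022}) to obtain
\[\overline L=\overline A+\overline N,\]
where $\overline A$ is an arithmetically ample $\mathbb R$-line bundle and $\overline N$ is an effective $\mathbb R$-line bundle. Unpacking the definitions of these two classes, I would write
\[\overline A=\sum_{k=1}^{r}\alpha_k\,\overline A_k,\qquad \overline N=\sum_{l=1}^{s}\beta_l\,\overline N_l\]
with $\alpha_k,\beta_l\in\mathbb R_{>0}$, each $\overline A_k$ an arithmetically ample (integral) adelic line bundle, and each $\overline N_l$ an effective (integral) adelic line bundle.

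The crucial auxiliary observation is that every $\overline A_k$ is itself $\mathbb R$-effective: by Proposition~\ref{properties of ample} we have $\wmu_{\min}^{\asy}(\overline A_k)>0$, so for some $n_k\in\mathbb N^{+}$ the tensor power $\overline A_k^{\otimes n_k}$ admits an effective global section, hence $\overline A_k=\tfrac{1}{n_k}\overline A_k^{\otimes n_k}$ is a positive $\mathbb R$-multiple of an effective integral adelic line bundle. Now I would pick rationals $\alpha'_k\in\mathbb Q_{\geq 0}$ with $\alpha'_k\leq\alpha_k$ and $\beta'_l\in\mathbb Q_{\geq 0}$ with $\beta'_l\leq\beta_l$, all differences arbitrarily small but not all zero, and set
\[\overline L':=\sum_{k}\alpha'_k\,\overline A_k+\sum_{l}\beta'_l\,\overline N_l.\]
Then $\overline L'$ is a $\mathbb Q$-line bundle, and
\[\overline L-\overline L'=\sum_{k}(\alpha_k-\alpha'_k)\,\overline A_k+\sum_{l}(\beta_l-\beta'_l)\,\overline N_l\]
is $\mathbb R$-effective by the preceding paragraph. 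To finish, I would invoke the continuity of $\widehat{\vol}(\ndot)$ on the space of adelic $\mathbb R$-line bundles: for the approximation sufficiently close to $(\alpha_k,\beta_l)$, $\overline L'$ stays in the (open) arithmetically big cone and satisfies $\widehat{\vol}(\overline L)-\widehat{\vol}(\overline L')<\epsilon$, while strict positivity of this difference follows from monotonicity of $\widehat{\vol}$ under the addition of a non-zero effective $\mathbb R$-line bundle (by slightly enlarging $\overline L'$ inside the big cone if necessary).

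\textbf{Main obstacle.} The delicate ingredient is the arithmetic Kodaira decomposition $\overline L=\overline A+\overline N$ and the precise unpacking of ``$\mathbb R$-effective'' into a positive $\mathbb R$-combination of integral effective line bundles, so that the coefficient-wise rounding really produces an effective $\mathbb R$-line bundle. Should such a decomposition not be available off the shelf for adelic $\mathbb R$-line bundles, it can be bootstrapped from the integral case already proved in Corollary~\ref{arithmetic Fujita approximation of ample}, combined with openness of the arithmetically big cone and continuity of $\widehat{\vol}(\ndot)$ on the space of adelic $\mathbb R$-line bundles.
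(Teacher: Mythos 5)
There is a genuine gap at the very first step. You invoke an arithmetic Kodaira-type decomposition $\overline L=\overline A+\overline N$ with $\overline A$ arithmetically ample and $\overline N$ arithmetically effective, but the paper's own remark following Proposition~\ref{contain an ample divisor} explicitly flags that the arithmetic Kodaira decomposition (available over number fields via \cite[Corollary 2.4]{Yuan2008}) is \emph{not} known to hold over a general adelic curve, and the whole point of several arguments in this paper is to avoid it. Your fallback --- bootstrapping from Corollary~\ref{arithmetic Fujita approximation of ample} plus openness of the big cone --- does not resolve this: that corollary only yields a decomposition $v^*(\overline L^{\otimes p})=\overline A\otimes\overline M$ on a blowup $X'$, which does not push down to a decomposition of $\overline L$ on $X$ itself; and openness of the big cone gives you that $\overline L-\delta\overline A$ is still arithmetically big, not that it is arithmetically effective.

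The paper sidesteps this entirely by decomposing only the \emph{geometric} part: it writes $L=\sum_{i=1}^r a_iL_i$ with $a_i>0$ and $L_i$ big (hence, after passing to powers, effective) line bundles, then \emph{freely chooses} metrics on the $L_i$ making each $\overline L_i$ arithmetically effective --- these metrics are unrelated to the given metric on $\overline L$ and no arithmetic decomposition of $\overline L$ is ever asserted. One then considers $\overline L-\sum_i t_i\overline L_i$ for small $t_i>0$: the difference $\sum_i t_i\overline L_i$ is effective by construction, continuity of $\widehat{\vol}$ controls the volume, and density of $\mathbb Q^r$ in $\mathbb R^r$ produces a choice of $t_i$ for which $\overline L'=\overline L-\sum_i t_i\overline L_i$ is a $\mathbb Q$-line bundle. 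If you want to repair your argument, you should replace the arithmetic Kodaira decomposition of $\overline L$ by this purely geometric decomposition of $L$ together with a free choice of effective metrics on the pieces.
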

\begin{proof}
The adelic $\mathbb R$-line bundle $\overline L$ is arithmetically big, so $L$ is a big $\mathbb R$-line bundle. By an equivalent definition \cite[Definition 2.2.21]{LazarsfeldI}, we have $L=\sum\limits_{i=1}^ra_iL_i$, where $a_1,\ldots,a_r>0$ and $L_1,\ldots,L_r$ are usual big line bundles. By definition, $L_1,\ldots,L_r$ are all effective in the geometric sense.

We equip metrics on $L_1,\ldots,L_r$ over $S$, such that $\overline L_1,\ldots,\overline L_r$ have non-zero small sections. Then for any $b_1,\ldots,b_r\in\mathbb R^+$, we have $b_1\overline L_1+\cdots+b_r\overline L_r$ has a non-zero small section.

By the continuity of $\widehat{\vol}(\ndot)$ from \cite[Corollary 6.4.24]{ChenMoriwaki2020}, for all $\epsilon>0$, there exist $\delta_1,\ldots,\delta_r>0$, such that for all $t_i\in(0,\delta_i)$, $i=1,\ldots,r$, we always have
\[0<\widehat{\vol}(\overline L)-\widehat{\vol}\left(\overline L-\sum_{i=1}^rt_i\overline L_i\right)<\epsilon.\]

If there does not exist any arithmetically big $\mathbb Q$-line bundles satisfying the requirements of the lemma, then $\overline L-\sum\limits_{i=1}^rt_i\overline L_i$ can never be a $\mathbb Q$-line bundle for all $t_i\in(0,\delta_i)$, $i=1,\ldots,r$. It means there is no $\mathbb Q$-point in the region
\[(a_1-\delta_1,a_1)\times\cdots\times(a_r-\delta_r,a_r)\subset\mathbb R^{r},\]
which has a strict positive Lebesgue measure. Then it contradicts to the fact that $\mathbb Q^{r}$ is dense in $\mathbb R^{r}$ with respect to the Euclidean topology.

By choosing the constant $\epsilon$ sufficiently small, we pick a family $t_1,\ldots,t_r\in\mathbb R^+$ such that $\overline L-\sum\limits_{i=1}^rt_i\overline L_i$ is an arithmetically big $\mathbb Q$-line bundle. By the above construction, $\sum\limits_{i=1}^rt_i\overline L_i$ has a non-zero small section. Let\[\overline L'=\overline L-\sum\limits_{i=1}^rt_i\overline L_i,\] which satisfies the requirement.
\end{proof}

We prove the Fujita approximation of $\mathbb R$-line bundles via Lemma \ref{density of Q-line bundles}.
\begin{theo}\label{arithmetic Fujita approximation of R-ample}
Let $S=(K,(\Omega,\mathcal A,\nu),\phi)$ be a proper adelic curve with a perfect $K$, where either $\mathcal A$ is a discrete $\sigma$-algebra or $K$ has a countable sub-field which is dense in $K_w$ for all $w\in\Omega$. Let $X$ be a projective variety over $\spec K$ with $\dim(X)=d$, and $\overline L$ be an arithmetically big adelic $\mathbb R$-line bundle on $X$ over $S$. For every $\epsilon>0$, there exists a birational morphism $\phi:X'\rightarrow X$ together with a $p\in\mathbb N^+$, such that the decomposition $\phi^*(\overline L^{\otimes p})=\overline M\otimes \overline G$ satisfies:
\begin{enumerate}
\item[(1)] $\overline M$ is an $\mathbb R$-line bundle having a non-zero small section, and $\overline G$ is an arithmetically ample line bundle;
\item[(2)] $p^{-(d+1)}\widehat{\vol}(\overline G)\geqslant\widehat{\vol}(\overline L)-\epsilon$.
\end{enumerate}
\end{theo}
\begin{proof}
By Lemma \ref{density of Q-line bundles}, for all $\epsilon>0$, there exists an arithmetically big $\mathbb Q$-line bundle $\overline L'$, such that $\overline L-\overline L'$ has a non-zero small section and
\[0<\widehat{\vol}(\overline L)-\widehat{\vol}(\overline L')<\epsilon.\]
Then we apply Corollary \ref{arithmetic Fujita approximation of Q-ample} to $\overline L'$, and we obtain the assertion.
\end{proof}
\section*{Declaration}
\subsection*{Conflict of interest statement}
The author declares that there is no potential conflict of interest.
\subsection*{Ethical statement}
The author guarantees that the manuscript has neither been published elsewhere nor submitted for publication in another journal, and will not be submitted to other journals while being reviewed. The author certify that they have written entirely the manuscript and that its main results are original, and cite all necessary sources. The author will inform immediately to the editors if any error is found in the manuscript after the submission.
\subsection*{The data availability statement}
The data that support the findings of this study are openly available. 
\backmatter

\bibliography{liu}
\bibliographystyle{smfplain}

\end{document}